\documentclass[a4paper,UKenglish,cleveref, autoref, thm-restate]{lipics-v2021}

\nolinenumbers

\usepackage{url}
\usepackage{amsthm}
\usepackage{booktabs}
\usepackage{algorithm}
\usepackage{algorithmic}
\usepackage{xspace}
\usepackage{tikz}
\usepackage{amsmath,amssymb}
\usepackage{amsfonts}
\usepackage{float}
\usepackage{caption}
\usepackage{enumitem}
\usepackage{graphicx}

\usepackage{todonotes}
\usepackage{bm}

\usepackage{hyperref}
\usepackage{xcolor}
\hypersetup{
    colorlinks=true,
    linkcolor=blue,
    urlcolor=blue,
    citecolor=blue
}

\usepackage{bm}

\title{Parameterized complexity of $r$-Hop, $r$-Step, and $r$-Hop Roman Domination} 
\titlerunning{}

\author{Sandip Das}{Indian Statistical Institute, Kolkata, India}{sandip.das69@gmail.com}{https://orcid.org/0000-0002-1825-0097}{}

\author{Sweta Das}{Indian Statistical Institute, Kolkata, India}{sweta.twin@gmail.com}{https://orcid.org/0000-0002-1825-0097}{}

\author{Sk Samim Islam}{Indian Statistical Institute, Kolkata, India}{samimislam08@gmail.com}{[orcid]}{}

\authorrunning{S. Das, S. Das and S.S.Islam} 

\Copyright{Sandip Das \and Sweta Das  \and Sk Samim Islam}

\ccsdesc{Theory of computation~Design and analysis of algorithms}
 \ccsdesc{Mathematics of computing~Discrete mathematics}

\keywords{$r$-Step domination, $r$-Hop domination, $r$-Roman Hop domination} 

\category{} 

\begin{document}

\maketitle

\section{Abstract}
The \textsc{Dominating Set} problem is a classical and extensively studied topic in graph theory and theoretical computer science. In this paper, we examine the algorithmic complexity of several well-known exact-distance variants of domination, namely \textsc{$r$-Step Domination}, \textsc{$r$-Hop Domination}, and \textsc{$r$-Hop Roman Domination}.

Let $G$ be a graph and let $r \geq 2$ be an integer. A set $S \subseteq V(G)$ is an \emph{$r$-hop dominating set} if every vertex in $V(G)\setminus S$ is at distance exactly $r$ from some vertex of $S$. Similarly, $S$ is an \emph{$r$-step dominating set} if every vertex of $G$ lies at distance exactly $r$ from at least one vertex of $S$. An \emph{$r$-hop Roman dominating function} on $G$ is a function $f \colon V(G)\to\{0,1,2\}$ such that for every vertex $v$ with $f(v)=0$, there exists a vertex $u$ at distance exactly $r$ from $v$ with $f(u)=2$. The \emph{weight} of $f$ is defined as $f(V)=\sum_{v\in V(G)} f(v)$. The \textsc{$r$-Hop Domination} (respectively, \textsc{$r$-Step Domination}) problem asks whether $G$ admits an $r$-hop dominating set (respectively, $r$-step dominating set) of size at most $k$, while the \textsc{$r$-Hop Roman Domination} problem asks whether $G$ admits an $r$-hop Roman dominating function of weight at most $k$.

It is known that for every $r\ge 2$, the problems \textsc{$r$-Step Domination}, \textsc{$r$-Hop Domination}, and \textsc{$r$-Hop Roman Domination} are \textsc{NP}-complete. We study their parameterized complexity. First we prove that for all $r\ge 2$, \textsc{$r$-Hop Roman Domination} is \textsc{W[2]}-complete. Furthermore, for every $r\ge 2$, \textsc{$r$-Step Domination} and \textsc{$r$-Hop Domination} remain \textsc{W[2]}-hard even when restricted to bipartite graphs and chordal graphs. Our reductions also imply that, unless the Exponential Time Hypothesis (ETH) fails, none of these problems admits an algorithm running in time $2^{o(n+m)}$ on graphs with $n$ vertices and $m$ edges.

\section{Introduction}
Domination in graphs is a fundamental concept in combinatorial optimization with numerous applications in facility location, network design, and distributed computing. Classical domination requires every vertex not in the dominating set to have a neighbor in the set. However, many practical scenarios demand more sophisticated control requirements, leading to various generalizations of domination. In communication networks, for instance, nodes may need to be monitored or controlled by other nodes at specific distances to ensure efficient routing or fault tolerance. This need has motivated the study of distance-based domination variants, including $r$-step, $r$-hop, and $r$-hop roman domination.

In 2000, ReVelle and Rosing~\cite{revelle2000defendens} introduced the 
\textsc{Roman Domination} problem, motivated by a classical military strategy problem. 
The problem was later studied by Stewart~\cite{stewart2012soldier}.

A \emph{Roman dominating function (RDF)} on a graph $G=(V,E)$ is a function 
$f:V\rightarrow\{0,1,2\}$ such that for every vertex $v\in V$ with $f(v)=0$, 
there exists a neighbor $u\in N(v)$ with $f(u)=2$. 
The \emph{weight} of $f$ is $f(V)=\sum_{v\in V} f(v)$, and the 
\emph{Roman domination number} $\gamma_R(G)$ is the minimum possible weight of an RDF on $G$. 
For an RDF $f$, the vertex set $V$ is partitioned into 
$V_0=\{v\in V \mid f(v)=0\}$, 
$V_1=\{v\in V \mid f(v)=1\}$, and 
$V_2=\{v\in V \mid f(v)=2\}$, 
so that $f$ can be represented as the ordered triple $(V_0,V_1,V_2)$~\cite{cockayne2004roman}. The \textsc{Roman Domination} problem asks whether $G$ admits an RDF of weight at most $k$.

Nascimento and Sampaio~\cite{nascimento2015roman} showed that 
\textsc{Roman Domination} is \textsc{NP}-hard even for subgraphs of grids and 
\textsc{APX}-hard for bipartite graphs with maximum degree four. 
They also proved fixed-parameter tractability for graph classes with bounded local treewidth, 
including bounded-degree and bounded-genus graphs (e.g., planar or toroidal graphs). 
Fernau~\cite{fernau2008roman} proved that \textsc{Roman Domination} is \textsc{W[2]}-complete.

Asemian et al.~\cite{asemian2025np} introduced the distance-based variant called the 
\emph{$r$-hop Roman dominating function} ($r$HRDF). 
For an integer $r\ge 2$, an $r$HRDF on $G=(V,E)$ is a function 
$f:V\rightarrow\{0,1,2\}$ such that for every $v\in V$ with $f(v)=0$, 
there exists $u\in V$ with $f(u)=2$ and $d_G(u,v)=r$. 
As before, $V$ is partitioned into $V_0,V_1,$ and $V_2$ according to the labels of $f$. The \textsc{$r$-Hop Roman Domination} problem asks whether $G$ admits an $r$HRDF of weight at most $k$.

They proved that for $r\ge 2$, the \textsc{$r$-Hop Roman Domination} problem is 
\textsc{NP}-complete even on planar bipartite and planar chordal graphs. 
However, their reduction does not establish parameterized hardness.

In this paper, we use a different reduction technique to determine the parameterized 
complexity of \textsc{$r$-Hop Roman Domination}. 
We present a parameterized reduction from the classical \textsc{Domination} problem, 
which yields the following result.

\begin{theorem}\label{thm:combined_r_roman_hop}
Let $r \geq 2$. The \textsc{$r$-Hop Roman Domination} problem satisfies the following:
\begin{enumerate}
    \item It is \textsc{W[2]}-complete.
    \item Unless ETH\footnote{The \textit{Exponential Time Hypothesis (ETH)} states that \textsc{3-SAT} cannot be solved in time $2^{o(n)}$, where $n$ is the number of variables in the input CNF formula.} fails, it does not admit a $2^{o(n+m)}$-time algorithm, where $n $ is the number of vertices and $m$ is the number of edges of the graph.
    \item For any $\epsilon > 0$, it cannot be approximated within a factor of $(1-\epsilon)\log n$, unless $\textsc{P} = \textsc{NP}$.
\end{enumerate}
\end{theorem}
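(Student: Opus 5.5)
The plan is a parameterized reduction from \textsc{Dominating Set} for hardness, plus a separate membership argument. The reduction will be linear in size, with parameter map $k \mapsto 2k + c$ for a constant $c$ depending only on $r$. This one reduction should give all three items: W[2]-hardness, the ETH lower bound (since \textsc{Dominating Set} has no $2^{o(n+m)}$ algorithm under ETH and our instance has $O(r(n+m))$ vertices and edges), and the $(1-\epsilon)\log n$ inapproximability (the known bound for \textsc{Dominating Set}, transferred because the optimum changes only by a factor $2$ and an additive constant).

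\textbf{Membership in W[2].} For membership I would reduce to a weighted covering problem in W[2]. In an $r$HRDF, a vertex $u$ with $f(u)=2$ covers $\{u\}\cup\{v : d_G(u,v)=r\}$, and a vertex with $f(u)=1$ covers only itself. I would guess the split $k = 2a + b$ ($O(k)$ choices). For each split I ask whether one can pick $a$ sets from the family $\{\{u\}\cup D_r(u)\}$ and $b$ singletons covering $V$. This is a two-coloured \textsc{Set Cover} instance with parameter $a+b$, which reduces to \textsc{Dominating Set} by standard padding: enforce the colour classes with $k+1$ private pendant copies. This gives an fpt Turing reduction. To get a many-one reduction I would instead encode the split into a single \textsc{Set Cover} instance with guard elements.

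\textbf{Hardness construction.} Given $(G,k)$, build $G'$ in three parts.
\begin{itemize}
\item For each $u \in V(G)$, create a ``dominator'' path $a^u_0, a^u_1, \dots, a^u_{r-1}$ and a ``target'' vertex $y_u$.
\item Join $a^u_{r-1}$ to $y_v$ for every $v \in N_G[u]$. Then $d(a^u_0, y_v) = r$ exactly when $v \in N_G[u]$. Any other route from $a^u_0$ to a target has length at least $r+2$, and routes to path vertices of other gadgets overshoot $r$.
\item Add one global gadget: a hub $w$ with $k+3$ pendant paths of length $r$. This forces $f(w)=2$ in any solution of weight $\le 2k+c$, since otherwise each pendant path would need its own weight. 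Connect $w$ so that every path vertex $a^u_i$ lies at distance exactly $r$ from $w$, while every $y_v$ does not.
\end{itemize}

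\textbf{Correctness.} For the forward direction, a dominating set $D$ of size $k$ gives $f(a^u_0)=2$ for $u\in D$, the forced gadget values, and $0$ elsewhere; this has weight $2k + c$. For the converse, given an $r$HRDF of weight $\le 2k+c$, I would normalize it: any weight placed on $y_v$ or on $a^u_i$ with $i>0$ is moved to $a^{u'}_0$ for some $u'\in N_G[v]$ without increasing weight. After normalization the $2$-labelled dominators form a dominating set of $G$ of size $\le k$.

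\textbf{Main obstacle.} The step I expect to be hardest is the distance bookkeeping in the hub gadget. I need a set of edges from $w$ which puts all $a^u_i$ at distance exactly $r$ from $w$ (this needs different attachment depths for different $i$), yet creates no shortcut that changes $d(a^u_0,y_v)$ and puts no $y_v$ at distance $r$ from $w$. My first attempt would use separate hubs $w_0,\dots,w_{r-1}$, each forced to $2$, with $w_i$ attached through a path of length $r-i$ to every $a^u_i$. I would then verify that all such routes to targets have length at least $r+1$, adjusting by one extra subdivision per layer if parity forces an unwanted equality. This keeps $c=O(r)$, which is a constant for fixed $r$, so the reduction remains a parameterized, linear-size, approximation-preserving one.
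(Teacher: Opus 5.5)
Your hardness reduction is not sound, and the failure is in the core gadget, not only in the hub bookkeeping you flag. The normalization step (move any weight on $y_v$ or on $a^u_i$, $i>0$, to some $a^{u'}_0$ without increasing the weight) fails in two ways.

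First, $f(y_v)=1$ costs $1$ while a dominator costs $2$, so the budget $2k+c$ lets a solution cover up to $2k$ targets by $1$'s. If $G$ is edgeless on $2k$ vertices, then $\gamma(G)=2k>k$, yet labelling every $y_v$ with $1$ gives weight $2k+c$.

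Second, a $2$ on a non-dominator reaches, at distance exactly $r$, targets that lie in no single $N_G[u]$. Take $r\ge 3$. Apart from routes through the hub, which have length more than $2r$, every path from $a^u_2$ to a target passes through $a^u_{r-1}$. So $d(a^u_2,y_{v'})=r$ exactly when $v'\notin N_G[u]$ and $d_G(u,v')\le 3$. On $G=P_7=v_1\cdots v_7$ with $k=2$, the labels $f(a^{v_4}_0)=f(a^{v_4}_2)=2$ cover all seven targets, although $\gamma(P_7)=3$. For $r=2$, the target $y_{v_3}$ of the middle vertex of $P_5$ is at distance $2$ from every other target (they share some $a^u_1$), so $k=1$ suffices, although $\gamma(P_5)=2$.

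In all three examples every other vertex is covered by the hub labels, which in your design are exactly what covers every non-target vertex. So the reduction is unsound even once the hub gadget works. What is missing is a design with two properties: covering a vertex of $G$ by $1$'s costs as much as a dominator, and every $2$ on an auxiliary vertex can be traded for a $2$ on a dominator without losing coverage. The paper's construction is built around this. It represents each $v_i$ by two vertices $u_i,u'_i$ that must both be covered. Its normalization turns pairs of $1$'s, or a $2$ on an auxiliary vertex such as $u'_i$ or $u^{r/2}_{ij}$, into a $2$ on some $u_i$. Items 2 and 3 rest on the same reduction and inherit the gap.

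The hub gadget also fails as sketched. Counting cannot force $f(w)=2$: if $f(w)\neq 2$, the $k+3$ pendant ends cost only $k+3\le 2k+c$. An exchange argument works instead. The end of a length-$r$ pendant path is at distance exactly $r$ only from $w$, so when $f(w)\neq 2$ every end carries positive weight. The weight on the ends can then be traded for a $2$ on $w$ (two ends already pay for it).

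More seriously, your forward labelling leaves interior vertices uncovered. On a pendant path $w\,p_1\cdots p_r$, the vertex $p_j$ is at distance exactly $r$ only from vertices at distance $r-j$ from $w$. In your single-hub layout, however, every $2$ other than $w$ is at distance $r$ from $w$. The interior vertices of the connector paths have the same problem, and the multi-hub variant as sketched does not fix it. The paper handles this by labelling a whole chain $s,s^1,\dots,s^{r-1}$ with $2$ (this is the $+2r$ in its budget) and hanging two paths of length $r$ at $s^{r-1}$. Then the $j$-th vertex of each path is at distance exactly $r$ from $s^{j-1}$, where $s^0=s$.

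Finally, membership needs no set cover. The sets $\{u\}\cup\{v : d_G(u,v)=r\}$ you wrote down are the closed neighbourhoods of the graph $G_r$ on $V(G)$ with $uv\in E(G_r)$ iff $d_G(u,v)=r$. Hence $f$ is an $r$HRDF of $G$ iff it is a Roman dominating function of $G_r$. This is a parameter-preserving many-one reduction to \textsc{Roman Domination}, which is in \textsc{W[2]}, and it is the paper's argument. Your split-guessing gives only an fpt-Turing reduction, under which \textsc{W[2]} is not known to be closed. The guard-element encoding that would make it many-one is not given.
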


Chartrand et al.~\cite{chartrand1995exact} introduced the notion of 
\emph{$r$-step domination}. For an integer $r \ge 2$, a vertex $u$ is said to be 
$r$-step dominated by a vertex $v$ if $d_G(u,v)=r$. 
A set $S \subseteq V(G)$ is an \emph{$r$-step dominating set} if every vertex of $G$ 
is $r$-step dominated by some vertex in $S$. 
The minimum cardinality of such a set is called the 
\emph{$r$-step domination number}, denoted $\gamma_{r\text{-step}}(G)$. 
The \textsc{$r$-Step Domination} problem asks whether a graph $G$ 
admits an $r$-step dominating set of size at most $k$.

This concept has been further studied in~\cite{caro2003some,dror2004note,hersh1999exact}. 
Henning et al.~\cite{henning20172} showed that \textsc{$2$-Step Domination} is 
\textsc{NP}-complete for planar bipartite and planar chordal graphs. 
Farhadi et al.~\cite{jalalvand2017complexity} established that for every $r \ge 2$, 
\textsc{$r$-Step Domination} is \textsc{NP}-complete even when restricted to 
planar bipartite and planar chordal graphs. 
However, their reductions do not imply parameterized hardness.

In this paper, we use a different reduction technique to determine the 
parameterized complexity of \textsc{$r$-Step Domination}. 
Our results are summarized below.

\begin{restatable}{theorem}{rstepdominationrestricted}
\label{thm:$r$-step domination restricted}

Let $r \geq 2$. The \textsc{$r$-Step Domination} problem satisfies the following:
\begin{enumerate}
    \item It is \textsc{W[2]}-complete, even when restricted to bipartite graphs and chordal graphs.
    \item Unless ETH fails, it does not admit a $2^{o(n+m)}$-time algorithm, where $n $ is the number of vertices and $m$ is the number of edges of the graph.
    \item For any $\epsilon > 0$, it cannot be approximated within a factor of $(1-\epsilon)\log n$, unless $\textsc{P} = \textsc{NP}$.
\end{enumerate}
\end{restatable}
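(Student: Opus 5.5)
The plan has two parts: membership in \textsc{W[2]} via a reduction to \textsc{Set Cover}, and hardness via a parameterized reduction from \textsc{Set Cover} (equivalently \textsc{Dominating Set}) parameterized by $k$, using one construction that is linear in size and changes $k$ only by an additive constant. Both follow the same approach as Theorem~\ref{thm:combined_r_roman_hop}.

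\textbf{Membership in W[2].} Given $(G,k)$, compute all pairwise distances and form the \textsc{Set Cover} instance with universe $V(G)$ and sets $N_r(v)=\{u : d_G(u,v)=r\}$ for $v\in V(G)$. An $r$-step dominating set of size $k$ is exactly a cover by $k$ of these sets. \textsc{Set Cover} parameterized by the solution size is in \textsc{W[2]}, so membership follows.

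\textbf{Hardness on bipartite graphs.} Let $(U,\mathcal F,k)$ be a \textsc{Set Cover} instance. I will build $G$ as follows.
\begin{itemize}
\item Take a vertex $x_S$ for each $S\in\mathcal F$ and a vertex $y_u$ for each $u\in U$.
\item For every $u\in S$, join $x_S$ and $y_u$ by an internally disjoint path of length $r$.
\item Add a hub $z$ adjacent to all $x_S$.
\item Attach to $z$ a path $z=p_0,p_1,\dots,p_{r-1}$.
\end{itemize}
Only paths through $z$ or through other element vertices can shortcut, so the following distances hold. First, $d(x_S,y_u)=r$ if $u\in S$, and $d(x_S,y_u)\ge r+2$ otherwise. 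Second, a vertex at position $i$ on an $x_S$--$y_u$ path is at distance exactly $r$ from $p_{r-1-i}$. Third, every cycle has the form $x_S\cdots y_u\cdots x_{S'}zx_S$ or a concatenation of such, with length $2r+2$ (or sums of even lengths), so $G$ is bipartite.

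The vertices $p_t$, and possibly $z$, still need to be dominated. For this I will extend a further forcing gadget at $p_{r-1}$: a pendant path of length about $2r$ whose far end can only be $r$-step dominated from inside the gadget. This forces a constant number $c=c(r)$ of gadget and hub vertices into every solution, and together these dominate all $p_t$, all path-internal vertices and the gadget itself. Setting $k'=k+c$, a set cover $\mathcal C$ gives the solution $\{x_S:S\in\mathcal C\}\cup\{\text{forced vertices}\}$.

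\textbf{Converse and chordal variant.} For the converse, the $c$ forced vertices do not reach any $y_u$ at distance exactly $r$. Any other solution vertex $w$ that dominates $y_u$ lies on a path incident to some $x_S$, or is some $x_S$. I will swap $w$ for an $x_S$ with $u\in S$. The main obstacle is a vertex $w$ internal to an $x_S$--$y_{u'}$ path that hits $y_u$ through $x_S$ at distance exactly $r$; this requires checking that all such $y_u$ satisfy $u\in S$, so replacing $w$ by $x_S$ loses nothing. Making this exchange argument airtight, together with designing the forcing gadget so it dominates everything without interfering with element distances, is the step I expect to be hardest. For chordal graphs, I will replace the hub $z$ by making $\{x_S\}$ a clique and the attached paths trees hanging off the clique. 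Non-members then sit at distance $r+1$, and the same domination analysis applies.

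\textbf{Consequences.} The graph has $O(r(|U|+|\mathcal F|+\sum|S|))$ vertices and edges. \textsc{Set Cover}, or \textsc{Dominating Set}, with linear-size input has no $2^{o(n+m)}$ algorithm under ETH, which gives part~2. Since $k'=k+c$ and the solution maps back in polynomial time, a $(1-\epsilon)\log n'$ approximation yields one for \textsc{Dominating Set}. This requires $\log n'=(1+o(1))\log n$, which holds when starting from \textsc{Dominating Set} instances; I will check that the additive $c$ and polynomial blow-up are absorbed into $\epsilon$, which gives part~3.
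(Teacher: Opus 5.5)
Your \textsc{Set Cover} membership argument is correct, and so is your bipartite construction, which is a valid alternative to the paper's gadget. The step you flag as the main obstacle does not occur. Take a vertex at position $i\ge 1$ on an $x_{S'}$--$y_{u'}$ path. It is at distance $r-i<r$ from $y_{u'}$. From every other $y_u$ it is at distance at least $i+r$ via $x_{S'}$ and at least $2r$ via $y_{u'}$. The vertices $z$ and $p_t$ are at distance at least $r+1$ from every $y_u$. So $y_u$ is $r$-step dominated by exactly $\{x_S: u\in S\}$, and no exchange is needed. You should still pin down the gadget and phrase the converse as a count rather than as "the $c$ forced vertices". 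Extend the path to $p_{3r-1}$. Then each $p_{2r+t}$ is $r$-step dominated only by $p_{r+t}$, and each $p_{r+t}$ only by $p_t$ or $p_{2r+t}$. This gives $|D\setminus\{x_S\}|\ge 2r$, so $c=2r$.

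The genuine gap is the chordal case. Making $\{x_S\}$ a clique while keeping internally disjoint $x_S$--$y_u$ paths does not give a chordal graph. Any element $u\in S\cap S'$ yields the cycle $x_S\cdots y_u\cdots x_{S'}x_S$ of length $2r+1\ge 5$. This cycle is induced, because internal path vertices have degree $2$. The pieces hanging off the clique are not trees, precisely because $y_u$ is shared. The missing device is the one the paper uses with its vertices $u_i^1$. Give each element a single attachment vertex $a_u$ adjacent to exactly $\{x_S:u\in S\}$, followed by a pendant path of length $r-1$ ending at $y_u$. Since $a_u$'s neighbourhood in the clique is a subset of it, $a_u$ is simplicial once its tail is removed. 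Then $d(x_S,y_u)$ is $r$ if $u\in S$ and $r+1$ otherwise, and your hub and position analysis goes through.

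The same device also repairs part~3. There, a polynomial blow-up is \emph{not} absorbed into $\epsilon$: if $n'\approx n^a$, then $(1-\epsilon)\log n'\approx a(1-\epsilon)\log n$. Your bipartite graph has $\Theta(r(n+m))$ vertices, which is $\Theta(n^2)$ on dense \textsc{Dominating Set} instances. So your claim that $\log n'=(1+o(1))\log n$ for such instances is false. The attachment-vertex graph has only $O(rn)$ vertices, so $\log n'=\log n+O(\log r)$. With the hub $z$ and no clique, it is also bipartite. The additive $2r$ is then harmless, since hard instances have unbounded optimum.
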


A related notion, called \emph{hop domination}, was introduced by 
Ayyaswamy et al.~\cite{natarajan2015hop}. 
A set $S \subseteq V(G)$ is a \emph{hop dominating set (HDS)} 
if every vertex in $V(G)\setminus S$ is at distance exactly two from some vertex in $S$. 
The minimum size of such a set is the \emph{hop domination number}, denoted $\gamma_h(G)$. 
A natural generalization, called \emph{$r$-hop domination} for $r \ge 2$, 
was later studied in~\cite{jalalvand2017complexity}.

For a graph $G$ and an integer $r \ge 2$, a set $S \subseteq V(G)$ 
is an \emph{$r$-hop dominating set} ($r$HDS) if every vertex in $V(G)\setminus S$ 
is at distance exactly $r$ from some vertex in $S$. 
The minimum cardinality of such a set is the 
\emph{$r$-hop domination number}, denoted $\gamma_{rh}(G)$. 
The \textsc{$r$-Hop Domination} problem asks whether $G$ 
admits an $r$-hop dominating set of size at most $k$.

The case $r=2$ corresponds to the classical \textsc{Hop Domination} problem. 
Henning et al.~\cite{henning20172} showed that \textsc{Hop Domination} 
is \textsc{NP}-complete for planar bipartite and planar chordal graphs. 
They further proved that computing a minimum hop dominating set 
in an $n$-vertex graph cannot be approximated within a factor of 
$(1-\varepsilon)\log n$ for any $\varepsilon>0$, unless $\textsc{P}=\textsc{NP}$, 
and provided a polynomial-time approximation algorithm with a ratio 
$1+\log\bigl(\Delta(\Delta-1)+1\bigr)$, where $\Delta$ is the maximum degree of $G$~\cite{henning2020algorithm}. 
Moreover, the problem is \textsc{APX}-complete for bipartite graphs of maximum degree $3$.

Karthika et al.~\cite{karthika2025polynomial} gave polynomial-time algorithms 
for \textsc{Hop Domination} on interval graphs and biconvex bipartite graphs. 
They also studied its parameterized complexity, showing that the problem 
is \textsc{W[1]}-hard when parameterized by solution size, and later strengthened 
this result to \textsc{W[2]}-hardness~\cite{karthika2025hop}. 
Farhadi et al.~\cite{jalalvand2017complexity} proved that for every $r \ge 2$, 
\textsc{$r$-Hop Domination} is \textsc{NP}-complete even on planar bipartite 
and planar chordal graphs; however, their reduction does not imply parameterized hardness.

We study the parameterized complexity of \textsc{$r$-Hop Domination} 
and obtain the following result.

\begin{restatable}{theorem}{rhopdominationrestricted}
\label{thm:$r$-hop domination restricted}
Let $r \geq 2$. The \textsc{$r$-Hop Domination} problem satisfies the following:
\begin{enumerate}
    \item It is \textsc{W[2]}-complete, even when restricted to bipartite graphs and chordal graphs.
    \item Unless ETH fails, it does not admit a $2^{o(n+m)}$-time algorithm, where $n $ is the number of vertices and $m$ is the number of edges of the graph.
    \item For any $\epsilon > 0$, it cannot be approximated within a factor of $(1-\epsilon)\log n$, unless $\textsc{P} = \textsc{NP}$.
\end{enumerate}

\end{restatable}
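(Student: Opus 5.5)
We prove all three parts with a single reduction from \textsc{Dominating Set}. It is a parameterized reduction with $k'=k+c$ for a constant $c$ (or $k'=k$), it is linear in $n+m$, and it preserves solution sizes up to an additive constant. This gives W[2]-hardness, the ETH lower bound (since \textsc{Dominating Set} has no $2^{o(n+m)}$ algorithm under ETH via the standard linear reduction from 3-SAT), and $(1-\epsilon)\log n$ inapproximability, because the new graph has polynomially related size and the optimum shifts only by a constant. Membership in W[2] follows by writing $r$-hop domination as a dominating-set instance on an auxiliary graph. Let $H$ have the same vertex set, with $uv\in E(H)$ iff $d_G(u,v)=r$. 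Then $S$ is an $r$HDS of $G$ iff every vertex outside $S$ has an $H$-neighbour in $S$, i.e.\ iff $S$ is a dominating set of $H$. This is a polynomial, parameter-preserving reduction to \textsc{Dominating Set}, which lies in W[2].

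\textbf{Bipartite case.} Given $(G,k)$ with $V(G)=\{v_1,\dots,v_n\}$, build two copies $A=\{a_i\}$ and $B=\{b_i\}$. For each $i$ and each $j$ with $v_j\in N[v_i]$, join $a_i$ to $b_j$ by a path of length $r$ using fresh internal vertices. This gives a bipartite graph when the paths are consistent in parity; if needed, use a subdivided incidence graph so that $d(a_i,b_j)=r$ exactly when $v_j\in N_G[v_i]$. Add a constant-size gadget: a hub vertex $z$ joined to all of $A$ by paths of suitable length, plus pendant paths forcing a few vertices into any solution. These forced vertices must $r$-hop dominate all internal path vertices and all of $A$, but none of $B$.

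The intended correspondence is as follows. A dominating set $D$ gives $S=\{a_i: v_i\in D\}\cup F$, where $F$ is the forced constant set. Conversely, any $r$HDS can be normalised by replacing a vertex on an $a_i$–$b_j$ path, or a vertex $b_j$, by $a_i$ (or by $a_j$) without losing domination of $B$. The $B$-vertices not in $S$ are exactly those that certify domination. The normalisation also handles $b_j\in S$, which corresponds to $v_j$ dominating itself.

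\textbf{Chordal case.} Make $A$ a clique and attach to each $a_i$ a path of length $r-1$ ending in a vertex adjacent to $b_j$ for $v_j\in N[v_i]$. More cleanly, make $B$ an independent set of vertices, each adjacent to the far ends of paths. To keep the graph chordal, I would instead use split-graph style gadgets: a clique $K$ on $A$, with each $b_j$ attached via a pendant path of length $r-1$ to a new vertex adjacent to $\{a_i: v_j\in N[v_i]\}$. Then $d(a_i,b_j)=r$ iff $v_j\in N[v_i]$, and otherwise the distance is $r+1$ since the clique adds one step. Trees hanging off a clique are chordal, but the ``new vertex adjacent to several $a_i$'' creates no cycles beyond chords through the clique, so chordality holds. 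The $r$-step version adds the requirement that $S$-vertices themselves be dominated; the forced gadget handles this.

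\textbf{Main obstacle.} The hard part is tuning the gadget so that exact-distance constraints hold. We need that internal path vertices and clique vertices are $r$-hop dominated by the constant forced set, while no forced vertex lies at distance exactly $r$ from any $b_j$. We also need the exchange argument showing an optimal solution may be assumed to lie in $A\cup F$. I would first work out $r=2$ and $r=3$ concretely, then generalise by parity of $r$, choosing the length of the pendant path at $z$ so that its endpoint sits at distance exactly $r$ from $A$ and from the internal vertices, and at distance $\neq r$ from $B$.
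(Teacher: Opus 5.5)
Your plan matches the paper's. Membership goes through the distance-$r$ auxiliary graph. Hardness is a reduction from \textsc{Dominating Set} with budget $k+\text{const}$, in which each $v_j$ gets a target $b_j$ whose vertices at distance exactly $r$ are precisely the copies $a_i$ with $v_i\in N[v_j]$. Your final chordal graph (clique $A$, a vertex $c_j$ adjacent to $\{a_i: v_i\in N[v_j]\}$, a path of length $r-1$ from $c_j$ to $b_j$) is exactly the paper's, with $c_j=u_j^1$ and $b_j=u_j^r$.

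\textbf{The forcing gadget is missing.} You leave it open, but the reduction rests on it, and your one concrete suggestion cannot work. If $z$ is adjacent to $A$, a vertex at distance $d$ from $z$ is at distance $d+1$ from $A$ and $d+1+t$ from the vertices at depth $t$ below $A$. So a single pendant endpoint covers only one depth. If instead $z$ is joined to $A$ by paths of length at least $2$, the neighbours of the $b_j$ lie at distance more than $r$ from $z$ and from every gadget vertex beyond it.

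What is needed, and what the paper uses (with no parity case split), is the following. Take a path $p_1p_2\cdots p_{2r}$ with $p_1$ adjacent to every $a_i$, and attach a pendant path of length $r$ at each $p_l$, ending in $p_l'$. Then:
\begin{itemize}
\item $d(p_l,x)=l+t$ for $x$ at depth $t$, where depth $0$ is $A$ and depth $r$ is $B$.
\item Hence $p_{r-t}$ covers depth $t$ for $0\le t\le r-1$, and no $p_l$ is at distance $r$ from $B$.
\item The length $2r$ guarantees that the depth-$j$ vertex of the pendant at $p_l$ is covered by $p_{l+r-j}$ or $p_{l-r+j}$.
\item Since $p_l$ is the only vertex at distance exactly $r$ from $p_l'$, every $r$HDS $S$ contains $p_l$ or $p_l'$. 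You cannot force $p_l$ itself in hop domination, but this suffices.
\end{itemize}
It follows that $|S\setminus(A\cup B)|\ge 2r$. Then $T=\{v_i:a_i\in S\}\cup\{v_j:b_j\in S\}$ is a dominating set of size at most $k$, and no normalisation step is needed.

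\textbf{Part 3 is not justified as written.} ``Polynomially related size'' does not preserve a $(1-\epsilon)\log n$ factor. If $|V(G')|\approx n^2$, an algorithm with ratio $(1-\epsilon)\log|V(G')|$ only gives ratio about $2(1-\epsilon)\log n$ for \textsc{Dominating Set}, which contradicts nothing. You need $|V(G')|=O(n)$. You also need brute force on instances of constant optimum, to absorb the additive $2r$. Your fresh-path bipartite graph has $\Theta(r(n+m))$ vertices, so it does not qualify.

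\textbf{No single one of your constructions gives both parts 2 and 3.} The chordal graph has $O(rn)$ vertices, so it serves part 3. But its clique contributes $\Theta(n^2)$ edges, so it gives no $2^{o(n+m)}$ bound. Among your constructions, the ETH bound comes only from the bipartite one and the approximation bound only from the chordal one. A bipartite graph with shared tails gives both at once: take $A$ independent, $c_j$ adjacent to $\{a_i: v_i\in N[v_j]\}$ and continuing as a path to $b_j$, and the gadget above. It has $O(rn)$ vertices and $O(rn+m)$ edges.
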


\medskip
\noindent\textbf{Organisation:} In Section \ref{sec:prelim}, we recall some definitions and notations.  In Sections \ref{Roman_Hop_Domination}, \ref{sec:rStepDomination}, and Section \ref{sec:rHopDomination}, we studied the hardness results of \textsc{$r$-Hop Roman Domination}, \textsc{$r$-Step Domination}, and \textsc{$r$-Hop Domination} problems, respectively. We conclude in Section \ref{sec:conclusion}.

\section{Preliminaries}\label{sec:prelim}
Let $G$ be a graph with vertex set $V(G)$ and edge set $E(G)$.
The \textit{degree} of a vertex $u \in V(G)$ is the total number of edges, incident to $u$, denoted by $deg(u)$. We denote $d(u,v)$ as the length of a shortest path joining $u$ and $v$ in $G$.
The \textit{open neighborhood} of a vertex $u \in V(G)$, $N(u)$, is the set of all the vertices $v \in V(G)$ such that $(u,v)$ is an edge in $E(G)$.
The \textit{closed neighborhood} of a vertex $u$, $N[u] = N(u) \cup \{u\}$
The \textit{maximum degree} of a graph $G$, $\Delta(G) = max\{deg(v):v\in V(G)\}$. Given an undirected graph $G = (V(G), E(G))$, a subset of vertices $D\subseteq V$ is called a dominating set if for every vertex $u \in V(G)\setminus D$, there is a vertex $v\in D$  such that $(u,v)\in E(G)$ and also $D$ is a total dominating set if for every vertex $u \in V(G)$, there is a vertex $v\in D$  such that $(u,v)\in E(G)$. A Roman dominating function on a graph $G=(V, E)$ is a function $ f : V \rightarrow \{0, 1, 2\}$ satisfying
the property that every vertex $u$ for which $f(u) = 0$ is adjacent to at least one vertex $v$ for which $f(v) = 2$. The weight of a Roman dominating function (RDF) is $f(V) = \sum_{u\in V}{f(u)}$.
The minimum weight of a Roman dominating function on a graph $G$ is called the Roman
domination number of $G$. A \textit{parameterized problem} is a tuple $(\mathcal{L},k)$ where $\mathcal{L}$ is a decision problem over some finite alphabet $\Sigma$ and $k:\Sigma^* \rightarrow \mathbb{N}$ is a parameter for each instance of $\mathcal{L}$.
An algorithm is \textit{fixed-parameter tractable} or \textit{fpt} if its running time is at most $f(k)\cdot n^{O(1)}$ for some arbitrary function $f$, where $n$ is the input size and $k$ is the parameter assigned to the input.
Given two parameterized problems $(\mathcal{L}_1,k_1)$ and $(\mathcal{L}_2,k_2)$ over some finite alphabet $\Sigma$, an \textit{fpt-reduction} from $(\mathcal{L}_1,k_1)$ to $(\mathcal{L}_2,k_2)$ is a function $g:\Sigma^* \rightarrow \Sigma^*$ such that $I\in \mathcal{L}_1$ iff $g(I)\in \mathcal{L}_2$ and $k_2 (g(I)) \leq f(k_1(I))$ for every $I\in \Sigma^*$, where $f$ is an arbitrary function.

\section{$r$-Hop Roman Domination (Proof of Theorem~\ref{thm:combined_r_roman_hop})}\label{Roman_Hop_Domination}


First, we show that the \textsc{$r$-Hop Roman Domination} problem is in \textsc{W[2]}. To show that we reduce our problem to the \textsc{roman domination} problem, which is \textsc{W[2]}-complete~\cite{fernau2008roman}. 

\medskip
\noindent
\fbox{%
  \begin{minipage}{\dimexpr\linewidth-2\fboxsep-2\fboxrule}
  \textsc{ Roman Domination (RD)} problem
  \begin{itemize}[leftmargin=1.5em]
    \item \textbf{Input:} An undirected graph $G = (V, E)$ and an integer $k \in \mathbb{N}$.
    \item \textbf{Question:} Does there exist a \emph{roman dominating function} on $G$ of weight at most $k$.
  \end{itemize}
  \end{minipage}%
}

\begin{lemma}\label{roman hop w[2]}
   For each $r\geq 2$, the \textsc{$r$-Hop Roman Domination} problem is in \textsc{W[2]}.
\end{lemma}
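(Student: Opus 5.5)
The plan is to give a parameterized (indeed polynomial-time, parameter-preserving) reduction from \textsc{$r$-Hop Roman Domination} to \textsc{Roman Domination}, which is \textsc{W[2]}-complete by~\cite{fernau2008roman}. Since \textsc{W[2]} is closed under fpt-reductions, this shows the lemma.

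Given an instance $(G,k)$, we construct the \emph{exact-distance-$r$ graph} $G_r$. It has vertex set $V(G)$, and $uv\in E(G_r)$ if and only if $d_G(u,v)=r$. This graph is computable in polynomial time by running BFS from every vertex. We output the instance $(G_r,k)$, so the parameter is unchanged.

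Correctness is immediate from the definitions, since a function $f:V\to\{0,1,2\}$ plays the same role in both graphs.
\begin{itemize}
\item Suppose $f$ is an $r$HRDF on $G$. Every $v$ with $f(v)=0$ has a vertex $u$ with $f(u)=2$ and $d_G(u,v)=r$. That means $u\in N_{G_r}(v)$, so $f$ is an RDF on $G_r$.
\item Conversely, suppose $f$ is an RDF on $G_r$. Each $v$ with $f(v)=0$ has a $G_r$-neighbour $u$ with $f(u)=2$. By construction $d_G(u,v)=r$, so $f$ is an $r$HRDF on $G$.
\end{itemize}
The weights are identical in both directions, so $(G,k)$ is a yes-instance if and only if $(G_r,k)$ is.

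There is essentially no obstacle. The only points to check are:
\begin{itemize}
\item $G_r$ is simple and loopless, because $r\ge 2$ rules out $d_G(u,u)=r$.
\item Vertices in different components of $G$ are at infinite distance, so they correctly receive no edge in $G_r$.
\item The construction runs in time polynomial in $|V(G)|$.
\end{itemize}
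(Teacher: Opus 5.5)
Your proposal is correct and matches the paper's proof. The paper builds the same exact-distance-$r$ graph $G'$, with $u_ju_k\in E(G')$ if and only if $d_G(v_j,v_k)=r$, carries the labelling $(V_0,V_1,V_2)$ over unchanged so that $r$HRDFs of $G$ and RDFs of $G'$ of weight at most $k$ correspond, and then applies Fernau's \textsc{W[2]}-completeness of \textsc{Roman Domination}. Your direct neighbourhood argument, with the checks for loops, disconnected graphs and running time, states this correspondence more cleanly than the paper's contradiction-style write-up.
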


\begin{proof}
    Let $(G,k)$ be an instance of the \textsc{$r$-Hop Roman Domination} problem. Let $V(G) = \{v_1, v_2, \dots, v_n\}$. We construct a new graph $G'$ as follows:
    
    \begin{enumerate}[]
        \item For each vertex $v_i \in V(G)$, introduce a corresponding vertex $u_i \in V(G')$.
        \item For every pair of vertices $v_j, v_k \in V(G)$ such that $d_G(v_j,v_k)=r$, 
we add the edge $(u_j,u_k)$ to $E(G')$.
    \end{enumerate}

    We now show that $G$ has an  $r$HRDF of weight at most $k$ if and only if $G'$ has an RDF of weight at most $k$.
    
    Let $f_1=(P,Q,R)$ be an $r$HRDF of $G$. Then $f_2=(P',Q',R')$ is a RDF of $G'$, where, if $(v_i,v_j,v_k)\in (P,Q,R)$, then corresponding vertex of  $(v_i,v_j,v_k)$ which is  $(u_i,u_j,u_k)$, belongs to $(P',Q',R')$. If not, there exists a vertex $u_l \in P'$ for which there is no $u_m\in Q'$ such that $d(u_m,u_l)=r$. This contradicts that $f_1$ is an $r$HRDF. 
    Conversely,$f_2=(P',Q',R')$ is a RDF of $G'$, where, if $(u_i,u_j,u_k)\in (P',Q',R')$, then corresponding vertex of  $(u_i,u_j,u_k)$ which is  $(v_i,v_j,v_k)$, belongs to $(P,Q,R)$. If not, there exist a vertex $v_l \in P$ for which there is no $v_m\in Q$ such that $d(v_m,v_l)=1$. This contradicts that $f_2$ is an RDF.
   \end{proof} 

To prove \textsc{$r$-Hop roman domination} problem \textsc{W[2]}-hard we reduce from \textsc{Domination} problem in general graph, which is \textsc{W[2]}-hard to our problem.
\begin{figure}[h]
    \centering
   \includegraphics[width=0.8\textwidth]{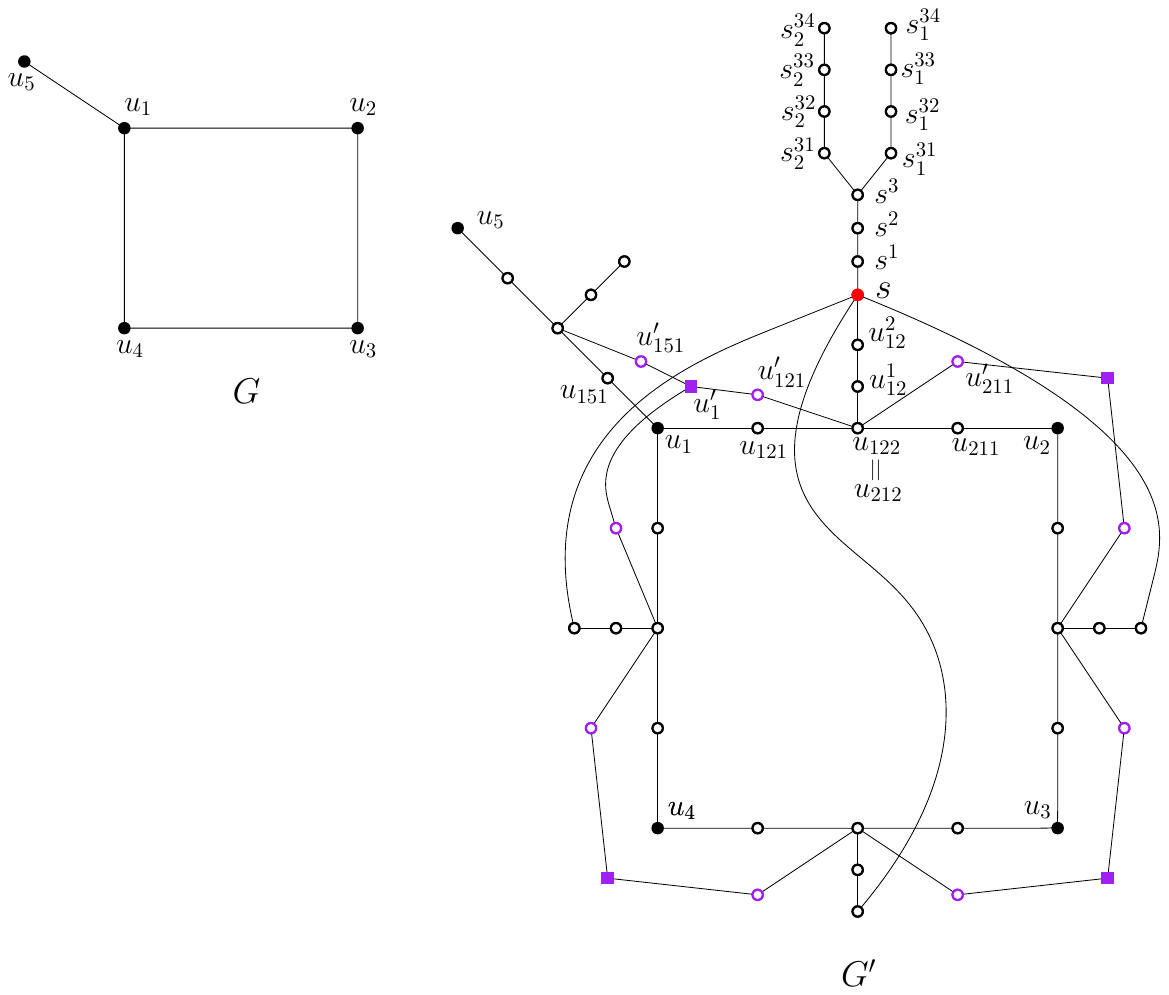}
    \caption{This is a pictorial description of the gadget construction of $G'$ from $G$, for $4$-Hop Roman Domination problem.}
    \label{roman hop hardness}
\end{figure}

\begin{lemma}\label{roman hop even W[2] complete}
  For any even number  $r\geq2$, the \textsc{$r$-Hop Roman Domination} problem is \textsc{W[2]}-complete.
  
\end{lemma}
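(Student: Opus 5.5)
The plan is to prove membership and hardness separately. Membership in \textsc{W[2]} is exactly Lemma~\ref{roman hop w[2]}, so only \textsc{W[2]}-hardness remains. For that I would give a parameterized reduction from \textsc{Dominating Set} on a general graph $G$ with parameter $k$. The new parameter will be $k' = 2k + 2c$, where $c$ is a constant depending only on $r$; it counts auxiliary vertices that every solution is forced to label $2$.

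Write $r = 2s$. For each $v_i \in V(G)$ I create a \emph{selector} vertex $a_i$ and a \emph{target} vertex $b_i$. For every $v_j \in N_G[v_i]$ I join $a_i$ to $b_j$ by an internally disjoint path of length $r$. The intended behaviour is that $d_{G'}(a_i,b_j)=r$ if and only if $v_j \in N_G[v_i]$.

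I would check this from the structure of the bipartite incidence gadget. Any other route from $a_i$ to $b_j$ must pass through at least three such paths, so it has length at least $3r$. Hence the distance is never exactly $r$ for non-adjacent pairs.

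Evenness of $r$ is used for a hub vertex $h$ attached to every $a_i$ by a path of length $s$. Two selectors are then at distance exactly $r$ through $h$, while the distance from $a_i$ to $b_j$ through the hub is $s+s+r > r$. So the hub does not create false dominations. This hub is also the reason the lemma is stated for even $r$.

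The remaining vertices are the internal path vertices and the hub path vertices. I would dominate them by a constant number of \emph{forced} vertices $x_1,\dots,x_c$. Each $x_t$ gets $k'+1$ private pendant paths of length $r$, so any $r$HRDF of weight at most $k'$ must set $f(x_t)=2$; otherwise it would have to pay at least $k'+1$ for those private endpoints. Each $x_t$ is attached, via the hub or via paths of suitable lengths, so that every auxiliary vertex at a given position $t$ along the gadget paths lies at distance exactly $r$ from some forced vertex. At the same time, no $b_j$ may lie at distance $r$ from any forced vertex. Achieving both conditions together, with distances measured through the hub layer, is the step I expect to be the main obstacle. I would first try placing the $x_t$ at the hub end and tuning the lengths of their attachment paths to the hub, one $x_t$ per residue of position modulo $r$. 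I would then verify by a case analysis over path positions that each $b_j$ is at distance $\neq r$ from every $x_t$.

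The correctness argument then runs as follows.
\begin{itemize}
\item \textbf{Forward direction.} If $D$ is a dominating set of $G$ of size at most $k$, set $f=2$ on $\{a_i : v_i\in D\}$ and on the forced vertices, and $f=0$ everywhere else. The weight is at most $k'$. Each $b_j$ is dominated because some $v_i \in D$ lies in $N_G[v_j]$, so $d_{G'}(a_i,b_j)=r$. The selectors are dominated through the hub by any chosen $a_i$, or through a forced vertex.
\item \textbf{Backward direction.} Take an $r$HRDF $f$ of weight at most $k'$. The forced vertices account for $2c$ of the weight. I would then normalise the rest, replacing any $b_j$ with $f(b_j)\ge 1$, or any auxiliary vertex with a positive label, by the label $2$ on some selector $a_i$ with $v_j\in N_G[v_i]$. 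This exchange never increases the weight beyond $2$ per replaced vertex. After normalisation the selectors labelled $2$ number at most $k$ and $r$-hop dominate every $b_j$, so they give a dominating set of $G$ of size at most $k$.
\end{itemize}

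Since $k'$ depends only on $k$ and the construction is polynomial, this is an fpt-reduction. Together with Lemma~\ref{roman hop w[2]} it gives \textsc{W[2]}-completeness for even $r$.
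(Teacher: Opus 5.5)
The backward direction fails because of vertices labelled $1$. A target $b_j$ can be handled by $f(b_j)=1$ at cost $1$, but your exchange replaces it by a selector of cost $2$. So the budget $2k$ only bounds the number of selectors by $2k$, not by $k$. This gives a real counterexample, not just a gap in the write-up. Let $G$ be edgeless on $k+1\ge 3$ vertices, so $\gamma(G)=k+1$. Put $2$ on $a_1,\dots,a_{k-1}$ and on the forced vertices, $1$ on $b_k,b_{k+1}$, and $0$ elsewhere. The weight is $2(k-1)+2+2c=k'$. The selectors $a_k,a_{k+1}$ are at distance $r$ from $a_1$ through $h$, each $b_j$ with $j<k$ is covered by $a_j$, and the auxiliary vertices are covered by the forced vertices exactly as your forward direction requires. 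So a no-instance is mapped to a yes-instance. You need each vertex of $G$ to be represented by two vertices that can only be covered from $N_G[v_j]$ (for instance a twin $b'_j$ of every $b_j$). Then treating an undominated $v_j$ with $1$-labels costs at least $2$. The paper's construction has this built in through the pair $u_i,u'_i$, and its normalisation turns $f(u_i)=f(u'_i)=1$ into $f(u_i)=2$.

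Second, the step you flag as the main obstacle is missing, and the placement you propose cannot work for $r\ge 4$. On the $a_i$--$b_j$ path, the vertex at distance $p$ from $a_i$ is at distance $s+p$ from $h$. For $p>s$ it is therefore more than $r$ away from $h$ and from everything hanging off $h$. In the graph you describe, the only vertices at distance exactly $r$ from it lie on paths at that same $a_i$ or that same $b_j$. Hence a vertex labelled $2$ covers such vertices on at most two of the $n$ paths $a_i$--$b_i$, and covering them all costs weight at least $n$. A second hub joined to all $b_j$ at a common distance $\ell<r$ (which is needed to reach these vertices) does not help either: it creates a single vertex at distance exactly $r$ from all but one $b_j$. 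You also need, and have not arranged, vertices labelled $2$ at every distance $1,\dots,r-1$ from each $x_t$. Otherwise the inner vertices of its $k'+1$ pendant paths cannot be covered within budget. The paper resolves all of this by hanging the hub $s$ off the \emph{midpoints} of the subdivided paths via paths of length $\frac r2+1$. Every auxiliary vertex is then within distance $r$ of $s$ and is covered by the appropriate vertex of the forced chain $s,s^1,\dots,s^{r-1}$ (the vertex $s^t$ covers those at distance $r-t$ from $s$). Meanwhile the vertices representing $G$ lie at distance $r+1$ from $s$ and are unaffected.
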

\begin{proof}
    It is known that the \textsc{Dominating Set} problem is \textsc{W[2]}-complete.  We reduce the \textsc{Domination} problem to to the \textsc{$r$-Hop Roman Domination} problem to show that \textsc{$r$-Hop Roman Domination} is \textsc{W[2]}-hard for any even $r$.
    Let $(G,k)$ be an instance of the \textsc{Dominating Set} problem. 
    Let $V(G) = \{v_1,v_2,\dots,v_n\}$, an edge set $E(G)$. We construct a new graph $G'$ in the following way:
(i) We take a copy of \( G \), say $G'$. We denote the vertex set of \( G' \) as \( V(G') = \{u_1, u_2, \dots, u_n\} \), where each vertex \( u_i \in V(G') \) corresponds to \( v_i \in V(G) \). (ii) For each edge $(u_i,u_j)\in E(G')$ with $i<j$, we subdivide it $(r-1)$-times by introducing $(r-1)$ new intermediate vertices $u_{ij1},u_{ij2},\dots,u_{ij(r-1)}=u_{ji(r-1)},u_{ji(\frac{r}{2}-1)},u_{ij(\frac{r}{2}-2)},\dots,u_{ji2},u_{ji1}$ and forming the path: $u_i-u_{ij1}-u_{ij2}-\dots-u_{ij(r-1)}=u_{ji(r-1)}-u_{ji(\frac{r}{2}-1)}-u_{ij(\frac{r}{2}-2)}-\dots-u_{ji2}-u_{ji1}-u_j$.
(iii) We add a universal vertex $s$ and draw the edges $(s,u_{ijr/2})$.
(iv) For each edge $(s,u_{ijr/2})$, we subdivide it $\frac{r}{2}$-times by introducing $r/2$ new intermediate vertices $u^1_{ij},u^2_{ij},\dots,u^{r/2}_{ij}$, where $i<j$ and forming the path: $u_{ijd/2}-u^1_{ij}-u^2_{ij}-\dots-u^{r/2}_{ij}-s$. (v) At $s$ we draw a $(r-1)$ length path including $s$ by introducing $r-1$ new vertices $s^1,a^2,\dots,s^{r-1}$ and forming the path $s-s^1-s^2-\dots-s^{r-1}$. (vi) At $s^{r-1}$ we draw two $r$-length path including $s^{r-1}$  by introducing $2r$ vertices $s^{(r-1)1}_1,s^{(r-1)2}_1,\dots,s^{(r-1)r}_1$ and $s^{(r-1)1}_2,s^{(r-1)2}_2,\dots,s^{(r-1)r}_2$ and forming the path $s^{r-1}-s^{(r-1)1}_1-s^{(r-1)2}_1-\dots-s^{(r-1)r}_1$ and $s^{r-1}-s^{(r-1)1}_2-s^{(r-1)2}_2-\dots-s^{(r-1)r}_2$. (vii) For each vertex $u_i \in V(G')$ we include a vertex $u'_i$. (viii) Join each $\frac{r}{2}$-th neighbor of $u_i$ to $u'_i$ by  an edge, then we consider all these new edges and for this such edge $(u'_i,u_{ij\frac{r}{2}})$, we subdivide it $(\frac{r}{2}-1)$-th times by introducing the vertices $u'_{ij1},u'_{ij2},\dots, u'_{ij(\frac{r}{2}-1)}$, where there is a path $u_i-u_{ij1}-u_{ij2}-\dots-u_{ij(\frac{r}{2}-1)}-u_{ij(\frac{r}{2})}$ between $u_i$ and $u_{ij\frac{r}{2}}$.

\begin{claim}
    $G$ has a dominating set of size at most $k$ if and only if $G'$ has an $r$HRDF of weight at most $2k+2r$, where $r$ is an even number and  $r\geq2$.
\end{claim}
\begin{claimproof}
   \textsf{(if part)} Suppose $G$ has a dominating set $\{v_1,v_2,\dots,v_k\}$. We consider the set $H=\{u_1,u_2,\dots,u_k,s,s^1,s^2.\dots,s^{r-1}\}$. We set $f=\{V(G')-H,\phi,H\}$. We conclude that any
vertex $v \in V(G')$ with $f(v) = 0$ is hop dominated by a vertex $u\in V(G')$ with $f(u) = 2$. Hence $G'$ has an $r$HRDF of weight at most $2k+2r$. 

\textsf{(Only if part)} Let $f=(V_0^f,V_1^f,V_2^f)$ be an $r$HRDF of weight $\leq 2k+2r$ of $G'$. Now if $f (s)+f(s^1)+f(s^2)+\dots+f (s^{r-1})+f(s^{(r-1)1}_1)+f(s^{(r-1)2}_1)+\dots+f(s^{(r-1)r}_1)+f(s^{(r-1)1}_2)+f(s^{(r-1)2}_2)+\dots+f(s^{(r-1)r}_2) < 2r$, there is a vertex in $\{s^{(r-1)1}_1,s^{(r-1)2}_1,\dots,s^{(r-1)r}_1,s^{(r-1)1}_2,$ $s^{(r-1)2}_2,\dots,s^{(r-1)r}_2\}$ such that it is not roman hop dominated by $f$.  Now we denote the set $\{s,s^{(r-1)1}_1,s^{(r-1)2}_1,\dots,s^{(r-1)r}_1,s^{(r-1)1}_2,s^{(r-1)2}_2,\dots,s^{(r-1)r}_2\}$ as $S$. Hence $\sum_{v\in V(G')/\{S\}}^{}f(v)\leq 2k$. We can modify $f$ to contain only vertices from $\{u_1, \dots, u_n\}$ in the following way:
\begin{itemize}
    \item If $f(u^{\frac{r}{2}}_{ij}) = 1$ and  $f(u_i') = 1$ for some $k, m$, then $f$ will be updated by $f=(V^f_0\cup \{ u_i',u^{\frac{r}{2}}_{ij}\},V^f_1\setminus\{u_i',u^{\frac{r}{2}}_{ij}\},V^f_2\cup \{u_i\})$.
    
    \item if $f(u^{\frac{r}{2}}_{ij})=2$ then $f$ will be updated by $f=(V^f_0 \cup \{u^{\frac{r}{2}}_{ij},\},V^f_1,V^f_2\cup \{u_i$ or $u_j\})$. 
    \item If $f(u_i)=1$ and $f(u_i')=1$, then $f$ will be updated by $f=(V^f_0 \cup \{u_i'\},V^f_1\setminus\{u_i\},V^f_2 \cup \{u_i \})$.
    \item If $f(u_i')=2$ then $f$ will be updated by $f=(V^f_0 \cup \{u_i'\},V^f_1,V^f_2 \cup \{u_i \})$.  
\end{itemize}

 Let  $T=\{v_i\in G : f(u_i)=2\}$. We show that $T$ is a dominating set of size $\leq k$ of $G$

    let $T = \{v_i \in G : f(u_i)=2\}$ is not a dominating set of $G$. That is, there exist a vertex $v_k \in G$ which is not dominated, hence the corresponding vertex $u_k (\in G') =0$ or $1$.
    
    \textbf{\emph{Case 1:}} Let $u_k \in V(G') =0$. Then there exit a vertex $x\in V(G')$ such that $f(x)=2$ and $d(u_k,x)=r$ since $G'$ is roman hop dominated by $f$. Now, this $f(x)=2$ can be changed to $f(x)=0$. Then we can choose a vertex $u_l \in V(G')$ in such a way that $d(u_k,u_l)=r$, and also $N_r(u_l)=N_r(x)$ and set $f(u_l)=2$. Now the corresponding vertex of $u_l$ which is $v_l$ is adjacent to $v_k$ or $v_l=v_k$, which is a contradiction. 
    
    \textbf{\emph{Case 2:}}  Let $f(u_k) =1$, where $u_k\in V(G')$.
    
    {\emph{Case i(a):}} If there exist a vertex $u_k'$  such that $f(u_i')=1$ and $d(u_k',u_k)=r$. Now $f(u_k) =1$ can be changed to $f(u_k) =2$ and $f(u_k')=1$ can be also changed to $f(u_k')=0$. Hence $u_k \in T$, which is a contradiction.
    
    {\emph{Case i(b):}}  If there exist a vertex $u^{\frac{r}{2}}_{kj}$, for some $j$ such that $f(u^{\frac{r}{2}}_{kj})=1$ and $d(u^{\frac{r}{2}}_{kj},u_k)=r$. Now $f(u_k) =1$ can be changed to $f(u_k) =2$ and $f(u^{\frac{r}{2}}_{kj})=1$ can be also changed to $f(u^{\frac{r}{2}}_{kj})=0$. Hence $u_k \in T$, which is a contradiction.

     {\emph{Case ii:}} If $u_k'$ such that $d(u_k',u_k)=r$ $f(u_k')=0$. Then there exit a vertex $x\in V(G')$ such that $f(x)=2$ and $d(u_k',x)=r$, if $x=u_k$ then we are done. If $x=u_l$, where $l \neq k$, then  corresponding vertex of $u_l$ in $G$ i.e $v_l$ is surely adjacent with $v_k$. If $x \notin \{u_1,u_2\dots,u_n\}$ then $f(x)=2$ can be changed to $f(x)=0$ and choose a $u_l$ such that $d(x,u_l)=r$ and then set $f(u_l)=2$. Clearly $u_l=u_k$ or corresponding vertices of $u_k$ and $u_l$ in $G$ are adjacent. This contradicts the fact $v_k \in V(G)$ which is not dominated.  
\end{claimproof}

Therefore the \textsc{$r$-Hop roman domination} problem, where $r$ is even is \textsc{W[2]}-hard. By lemma~\ref{roman hop w[2]}, we have that the problem is in \textsc{W[2]}. Hence the problem is \textsc{W[2]}-complete.
\end{proof}

\begin{figure}[h]
    \centering
   \includegraphics[width=0.6\textwidth]{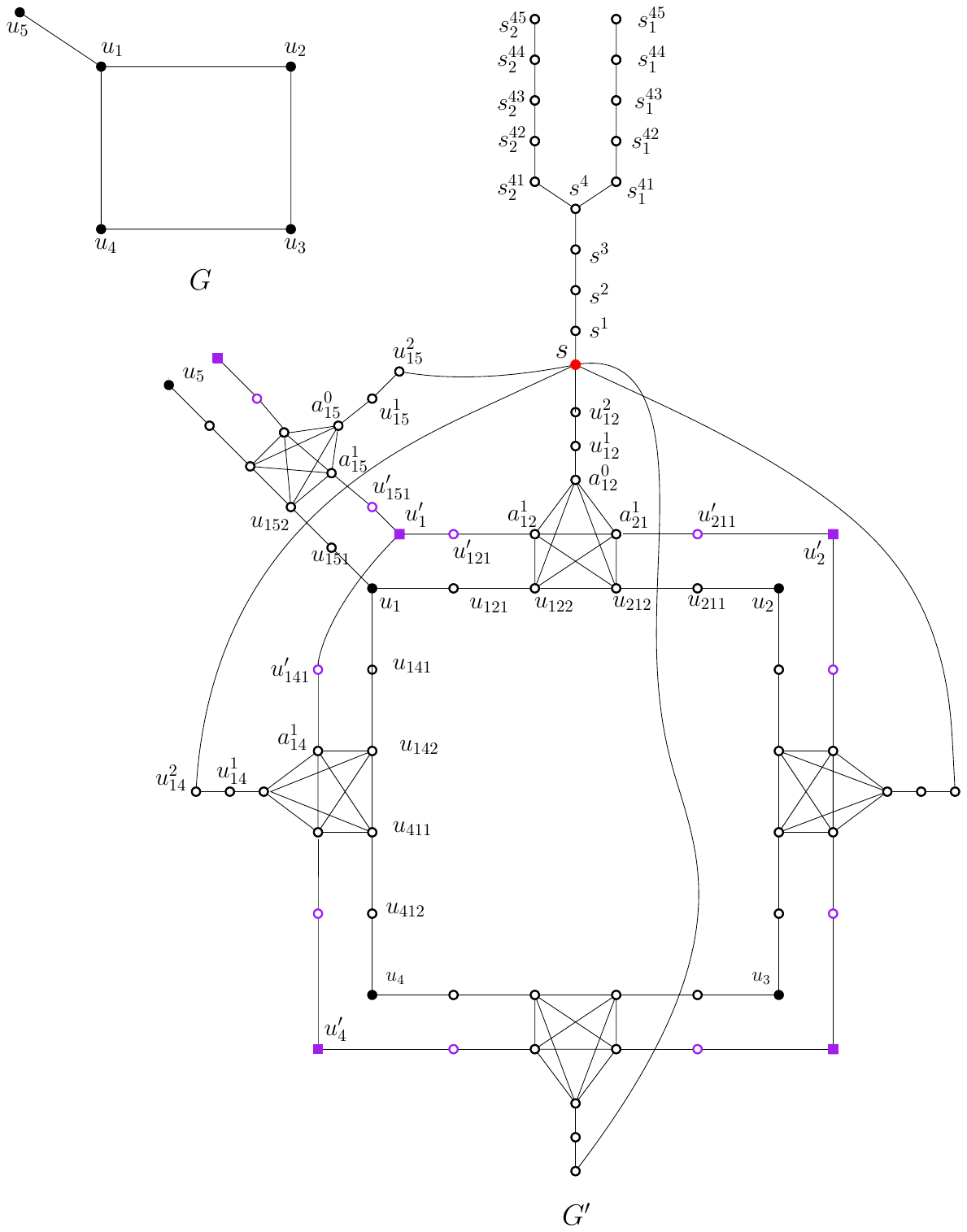}
    \caption{This is a pictorial description of the gadget construction of $G'$ from $G$, for \textsc{$5$-Hop Roman Domination} problem.}
    \label{roman hop hardness}
\end{figure}

\begin{lemma}
For any odd number  $r\geq 3$, the    \label{roman hop odd W[2] complete}
    \textsc{$r$-Hop Roman  Domination} problem is \textsc{W[2]}-complete.
    
\end{lemma}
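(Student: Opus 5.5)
Membership in \textsc{W[2]} is already given by Lemma~\ref{roman hop w[2]}, which holds for every $r\ge 2$. So only \textsc{W[2]}-hardness for odd $r\ge 3$ remains. I would prove it by a parameterized reduction from \textsc{Dominating Set}, changing the even-case construction of Lemma~\ref{roman hop even W[2] complete}. For odd $r$ a subdivided edge has no middle vertex, so the symmetric ``midpoint'' gadget is unavailable.

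\emph{Construction.} Given $(G,k)$, I would use a bipartite ``source/target'' construction instead of subdividing the edges of $G$ in place. For each $v_i$ create a source vertex $u_i$ and $k+1$ target copies $w_i^1,\dots,w_i^{k+1}$. For every $j\in N_G[i]$ and every $t$, join $u_i$ to $w_j^t$ by a fresh path of length exactly $r$. Then $d(u_i,w_j^t)=r$ holds iff $v_i$ dominates $v_j$ in $G$. Any other route between a source and a target passes through at least two such paths, so it has length at least $2r>r$.

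Next, add a forcing gadget as in the even case: a hub $s$ with a path $s-s^1-\dots-s^{r-1}$ and two pendant paths of length $r$ at $s^{r-1}$. This gadget forces weight at least $2r$ on it, and an optimal choice gives value $2$ to $r$ vertices $s,s^1,\dots,s^{r-1}$. The hub is then wired to the construction by subdivided paths whose lengths are chosen so that two conditions hold:
\begin{itemize}
\item every $u_i$ and every internal subdivision vertex is at distance exactly $r$ from some vertex of $\{s,s^1,\dots,s^{r-1}\}$;
\item every target copy $w_j^t$ is at distance different from $r$ from all of these vertices, and the hub creates no shortcut giving a source–target distance $\le r$.
\end{itemize}
For example, I would try placing all $u_i$ at distance exactly $r$ from $s$ and all $w_j^t$ at distance $r+1$ or more. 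An internal vertex at position $p$ on a $u_i$–$w_j^t$ path then lies at distance $r-p$ from $s$ through $u_i$, hence at distance exactly $r$ from $s^{p}$ along the forcing path. The parity of $r$ is what makes this bookkeeping differ from the even case. The target budget is $2k+2r$.

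\emph{Correctness.} For the forward direction, from a dominating set $D$ assign $2$ to $\{u_i : v_i\in D\}\cup\{s,\dots,s^{r-1}\}$ and $0$ elsewhere. Each target copy is then covered by its dominator, and every other vertex is covered by the gadget.

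For the backward direction, first show that the gadget absorbs weight at least $2r$, leaving at most $2k$ elsewhere. Then normalize the function:
\begin{itemize}
\item Any vertex $x$ outside the gadget with $f(x)=2$ that covers some target copy $w_j^t$ must lie on a path ending at some $u_i$ with $j\in N[i]$, or be $u_i$ itself. Its covered targets form a subset of those covered by $u_i$, so we move the $2$ to $u_i$.
\item Because there are $k+1$ copies of each target but total weight at most $2k$, some copy $w_j^t$ has value $0$. That copy must be covered at distance $r$ by a $2$-vertex, which after normalization is a source $u_i$ with $v_i\in N[v_j]$.
\end{itemize}
Hence $\{v_i : f(u_i)=2\}$ is a dominating set of size at most $k$. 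The reduction has linear size in $n+m$ when $k$ is treated as bounded in the copying step. Alternatively, one can replace the $k+1$ copies by a constant number of copies together with a weight-counting argument, which makes the reduction linear and also transfers the ETH bound.

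\emph{Main obstacle.} The hard step is the exact-distance bookkeeping around the hub. Every non-target vertex must be at distance exactly $r$ from a forced $2$-vertex, yet no forced vertex may be at distance $r$ from a target copy, and no hub path may create a distance-$r$ pair $(u_i,w_j^t)$ with $v_i v_j\notin E(G)$ and $i\neq j$. I would first fix the distances of the sources and targets from $s$. Then I would check the positions $p=1,\dots,r-1$ along each connecting path against the forcing path $s^1,\dots,s^{r-1}$, adding a second hub path if some residue class modulo $r$ is left uncovered.
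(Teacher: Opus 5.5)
Your membership argument is fine. The skeleton of the hardness reduction is also sound: sources $u_i$, fresh connector paths of length exactly $r$, the forcing gadget at $s^{r-1}$, and budget $2k+2r$. But the proof stops exactly at the step you call the main obstacle. The hub is never actually wired to the construction, so two things remain unproved: the forward direction (``every other vertex is covered by the gadget''), and the absence of spurious distance-$r$ pairs involving targets.

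The one concrete choice you suggest fails. Suppose $d(s,u_i)=r$ and the hub reaches the construction through the sources. Take a vertex $y$ at distance $p\in\{1,\dots,r-1\}$ from $u_i$ on a connector path. Then $d(s,y)=\min(r+p,\,d(s,w_j^t)+r-p)\ge r+1$, not $r-p$ as you write. So $y$ is at distance more than $r$ from every $s^m$, and it is uncovered when all non-chosen vertices get value $0$. The ``second hub path'' fix is left unspecified.

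A simple repair is to make $s$ adjacent to every $u_i$. Then:
\begin{itemize}
\item the vertex at position $p$ ($0\le p\le r-1$) of a connector path is at distance $p+1$ from $s$, hence at distance exactly $r$ from $s^{r-1-p}$;
\item every target is at distance $r+1$ from $s$, so at distance more than $r$ from every gadget vertex;
\item a source--target route through $s$ has length $r+2$;
\item no vertex outside the gadget is within distance $r$ of a pendant vertex, so your $2r$ lower bound holds.
\end{itemize}
Moreover, the only vertices at distance exactly $r$ from $w_j^t$ are then the $u_i$ with $j\in N_G[i]$. So the normalization step is unnecessary, and nothing depends on the parity of $r$. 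Repaired this way, your route is genuinely different from the paper's, which subdivides the edges of $G$ in place around a $K_5$ middle gadget with auxiliary $u'_i$ branches.

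The backward direction also has a counting slip. With $k+1$ copies of each target and weight at most $2k$ outside the gadget, it does not follow that some copy of $w_j$ has value $0$. Giving value $1$ to all $k+1$ copies of one $w_j$ costs only $k+1\le 2k$, and your normalization only relocates $2$'s. There are two easy fixes:
\begin{itemize}
\item Use $2k+1$ copies. Then at most $2k$ vertices outside the gadget are positive, so every $j$ has a copy of value $0$. That copy is covered by some $u_i$ with $f(u_i)=2$ and $j\in N_G[i]$.
\item Keep $k+1$ copies and note that at most one index $j$ can have all copies positive. In that case $2\,|\{i:f(u_i)=2\}|\le k-1$, and adding $v_j$ still gives a dominating set of size at most $k$.
\end{itemize}
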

\begin{proof}

It is known that the \textsc{Dominating Set} problem is \textsc{W[2]}-complete.  We reduce the \textsc{Domination} problem to the \textsc{$r$-Hop Domination} problem to show that the \textsc{$r$-Hop Domination} problem is \textsc{W[2]}-hard for any odd $r\geq 3$.
 
    Let $(G,k)$ be an instance of the \textsc{ Dominating Set} problem. Let $V(G) = \{v_1,v_2,\dots,v_n\}$, an edge set $E(G)$ and a dominating set $D(G)$ of size at least $k$, we construct a new graph $G'$ in the following way:
(i) We start by taking a copy of \( G \), denoting the vertex set of \( G' \) as \( V(G') = \{u_1, u_2, \dots, u_n\} \), where each vertex \( u_i \in V(G') \) corresponds to \( v_i \in V(G) \). 
(ii) For each edge $(u_i,u_j)\in E(G')$ with $i<j$, we replace it with a $(d+3)$-length path by introducing $d+1$ new intermediate vertices $u_{ij1},u_{ij2},\dots,u_{ij(\frac{r-1}{2})},u_{ji(\frac{r+1}{2})},u_{ji(\frac{r+1}{2}+1)},\dots,u_{ji2},u_{ji1}$ and forming the path $L_{ij}$: $u_i-u_{ij1}-u_{ij2}-\dots-u_{ij(\frac{r-1}{2})}-u_{ji(\frac{r+1}{2})}-u_{ji(\frac{r+1}{2}+1)}-\dots-u_{ji2}-u_{ji1}-u_j$. (iv) For every $L_{ij}$ we select $u_{ij(\frac{r-1}{2})}$ and $u_{ij(\frac{r+1}{2})}$ and also  introduce three new vertices $a^1_{ij}, a^1_{ji},a^0_{ij}=a^0_{ji}$ and create a $K_5$ (namely $A_{ij}$) with these new vertices and $u_{ij(\frac{r-1}{2})},u_{ji(\frac{r+1}{2})}$ vertices of $L_{ij}$.
(v) We add a universal vertex $s$ and draw the edges $(s,a^5_{ij})$.
(v) For each edge $(s,a^0_{ij})$, we replace it with a $(r-1)/2+2$-length path by introducing $(r-1)/2$ new intermediate vertices $u^1_{ij},u^2_{ij},\dots,u^{(r-1)/2}_{ij}$, where $i<j$ and forming the path: $a^0_{ij}-u^1_{ij}-u^2_{ij}-\dots-u^{(r-1)/2}_{ij}-s$.
(vi) At $s$ we add a $(r-1)$ length path including $s$ by introducing $r-1$ new vertices $s^1,s^2,\dots,s^{r-1}$ and forming the path $s-s^1-s^2-\dots-s^{r-1}$ . (vii) At $s^{r-1}$ we add two $r$-length path including  $s^{r-1}$  by introducing $2r$ vertices $s^{(r-1)1}_1,s^{(r-1)2}_1,\dots,s^{(r-1)d}_1$ and $s^{(r-1)1}_2,s^{(r-1)2}_2,\dots,s^{(r-1)r}_2$ and forming the path $s^{r-1}-s^{(r-1)1}_1-s^{(r-1)2}_1-\dots-s^{(r-1)r}_1$ and $s^{r-1}-s^{(r-1)1}_2-s^{(r-1)2}_2-\dots-s^{(r-1)r}_2$. (viii) For each vertex $u_i \in V(G')$ we include a vertex $u'_i$. (ix) Join each $a^1_{ij}$ to $u'_i$ by  an edge, then we consider all these new edges and for this such edge $(u'_i,a^1_{ij})$, we subdivide it $(\frac{r-1}{2}-1)$-th times by introducing the vertices $u'_{ij1},u'_{ij2},\dots, u'_{ij(\frac{r-1}{2})}$, and forming the path $u_i'-u'_{ij1}-u'_{ij2}-\dots- u'_{ij(\frac{r-1}{2})}-a^1_{ij}$  when there is a path $u_i-u_{ij1}-u_{ij2}-\dots-u_{ij(\frac{r-1}{2}-1)}-u_{ij(\frac{r-1}{2})}$ between $u_i$ and $u_{ij(\frac{r-1}{2})}$.




\begin{claim}
    \label{lemma roman hop}
    $G$ has a dominating set of size at most $k$ if and only if $G'$ has an $r$HRDF of weight at most $2k+2r$, where $r$ is an odd number with $\geq3$.
\end{claim}
\begin{claimproof}
    \textsf{(if part)} Suppose $G$ has a  dominating set $\{v_1,v_2,\dots,v_k\}$. We consider the set $H=\{u_1,u_2,\dots,u_k,s,$ $s^1,s^2.\dots,s^{r-1}\}$. We set $f=\{V(G')-H,\phi,H\}$. We conclude that any
vertex $v \in V(G')$ with $f(v) = 0$ is hop dominated by a vertex $u\in V(G')$ with $f(u) = 2$. Hence $G'$ has an $r$HRDF of weight at most $2k+2r$. 

\textsf{(Only if part)} Let $f=(V_0^f,V_1^f,V_2^f)$ is an $r$HRDF of weight at most $\leq 2k+2r$ of $G'$. Now if $f (s)+f(s^1)+f(s^2)+\dots+f (s^{r-1})+f(s^{(r-1)1}_1)+f(s^{(r-1)2}_1)+\dots+f(s^{(r-1)r}_1)+f(s^{(r-1)1}_2)+f(s^{(r-1)2}_2)+\dots+f(s^{(r-1)r}_2) < 2r$, there is a vertex in $\{s^{(r-1)1}_1,s^{(r-1)2}_1,\dots,s^{(r-1)r}_1,s^{(r-1)1}_2,$ $s^{(r-1)2}_2,\dots,s^{(r-1)r}_2\}$ such that it is not roman hop dominated by $f$.  Now we denote the set $\{s,s^{(r-1)1}_1,s^{(r-1)2}_1,\dots,s^{(r-1)r}_1,s^{(r-1)1}_2,s^{(r-1)2}_2,\dots,s^{(r-1)r}_2\}$ as $S$. Hence $\sum_{v\in V(G')/\{S\}}^{}f(v)\leq 2k$. We can modify $f$ to contain only vertices from $\{u_1, \dots, u_n\}$ in the following way:
\begin{itemize}
    \item If $f(u_i') = 1$ and  $f(u_i') = 1$  then $f$ will be updated by $f=(V^f_0\cup \{ u_i',u^{\frac{r}{2}}_{ij}\},V^f_1\setminus\{u_i',u^{\frac{r}{2}}_{ij}\},V^f_2\cup \{u_i\})$.
    
    \item if $f(u^{(\frac{r-1}{2})}_{ij})=2$ then $f$ will be updated by $f=(V^f_0 \cup \{u^{(\frac{r-1}{2})}_{ij},\},V^f_1,V^f_2\cup \{u_i$ or $u_j\})$. 
    \item If $f(u_i)=1$ and $f(u_i')=1$ then $f$ will be updated by $f=(V^f_0 \cup \{u_i'\},V^f_1\setminus\{u_i\},V^f_2 \cup \{u_i \})$.
    \item If $f(u_i')=2$  then $f$ will be updated by $f=(V^f_0 \cup \{u_i'\},V^f_1,V^f_2 \cup \{u_i \})$.  
\end{itemize}

     Let $T = \{v_i \in G : f(u_i)=2\}$. We show that $T$ is a dominating set of size $\leq k$ of $G$.

    let $T = \{v_i \in G : f(u_i)=2\}$ is not a dominating set of $G$. That is, there exists a vertex $v_k \in G$ which is not dominated, hence the corresponding vertex $u_k (\in G') =0$ or $1$.
    
    \textbf{\emph{Case 1:}} Let $u_k (\in G') =0$. Then there exit a vertex $x\in G'$ such that $f(x)=2$ and $d(u_k,x)=r$ since $G'$ is roman hop dominated by $f$. Now, this $f(x)=2$ can be changed to $f(x)=0$. Then we can choose a vertex $u_l \in V(G')$ in such a way that $d(u_k,u_l)=r$, and also $N_r(u_l)=N_r(x)$ and set $f(u_l)=2$. Now the corresponding vertex of $u_l$, which is $v_l$, is adjacent to $v_k$ or $v_l=v_k$, which is a contradiction. 
    
    \textbf{\emph{Case 2:}}  Let $f(u_k) =1$, where $u_k\in V(G')$.
    
    {\emph{Case i(a):}} If there exist a vertex $u_k'$  such that $f(u_k')=1$ and $d(u_k',u_k)=r$. Now $f(u_k) =1$ can be changed to $f(u_k) =2$ and $f(u_k')=1$ can be also changed to $f(u'_k)=0$. Hence $u_k \in T$, which is a contradiction.
    
    {\emph{Case i(b):}}  If there exist a vertex $u^{(\frac{r-1}{2})}_{kj}$, for some $j$ such that $f(u^{(\frac{r-1}{2})}_{kj})=1$ and $d(u^{(\frac{r-1}{2})}_{kj},u_k)=r$. Now $f(u_k) =1$ can be changed to $f(u_k) =2$ and $f(u^{(\frac{r-1}{2})}_{kj})=1$ can be also changed to $f(u^{(\frac{r-1}{2})}_{kj})=0$. Hence $u_k \in T$, which is a contradiction.

     {\emph{Case ii: }}If $u_k'$ such that $d(u_k',u_k)=r$, $f(u_k')=0$. Then there exits a vertex $x\in V(G')$ such that $f(x)=2$ and $d(u_k',x)=r$, if $x=u_k$ then we are done. If $x=u_l$, where $l \neq k$, then  the corresponding vertex of $u_l$ in $G$ i.e $v_l$ is surely incident to $v_k$. If $x \notin \{u_1,u_2\dots,u_n\}$ then $f(x)=2$ can be changed to $f(x)=0$ and choose a $u_l$ such that $d(x,u_l)=r$ and then set $f(u_l)=2$. Clearly, $u_l=u_k$ or the corresponding vertices of $u_k$ and $u_l$ in $G$ are adjacent. This contradicts the fact that $v_k \in V(G)$, which is not dominated. 
\end{claimproof}
Therefore the \textsc{$r$-Hop roman dominating} problem, where $r$ is odd and $r\geq3$ is \textsc{W[2]}-hard. By Lemma~\ref{roman hop w[2]}, we have that the problem is in \textsc{W[2]}. Hence, the problem is \textsc{W[2]}-complete.
\end{proof}

 From Lemma~\ref{roman hop even W[2] complete} and Lemma~\ref{roman hop odd W[2] complete} we get the following theorem:
 \begin{restatable}{lemma}{rromanhopdomination}\label{thm:$r$-roman hop domination} For each $r\geq 2$, the \textsc{$r$-Hop Roman Domination} problem is \textsc{W[2]}-complete.
\end{restatable}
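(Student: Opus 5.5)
The statement is an immediate consequence of the lemmas already proved in this section. I would split on the parity of $r$ and invoke the matching earlier result in each case; no new reduction or gadget is needed.

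Let $r\ge 2$ be fixed. Every such integer is either even, or odd with $r\ge 3$. The value $r=1$ is excluded by hypothesis, so the two lemmas cover every case.

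\emph{Case 1: $r$ even.} Lemma~\ref{roman hop even W[2] complete} gives directly that \textsc{$r$-Hop Roman Domination} is \textsc{W[2]}-complete. That lemma combines two ingredients:
\begin{itemize}
\item membership in \textsc{W[2]}, from Lemma~\ref{roman hop w[2]}, via the graph on $V(G)$ whose edges join pairs at distance exactly $r$, which reduces to \textsc{Roman Domination};
\item \textsc{W[2]}-hardness, via the parameterized reduction from \textsc{Dominating Set} with parameter $2k+2r$. Since $r$ is a fixed constant, this parameter is bounded by a function of $k$ alone.
\end{itemize}

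\emph{Case 2: $r$ odd, $r\ge 3$.} Lemma~\ref{roman hop odd W[2] complete} gives the same conclusion. Membership in \textsc{W[2]} again comes from Lemma~\ref{roman hop w[2]}, and hardness from the odd-$r$ gadget built around the $K_5$'s, with target weight $2k+2r$.

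The only point needing care is that each reduction is a valid fpt-reduction for every fixed $r$. This requires that the constructed graph $G'$ has size polynomial in $|V(G)|+|E(G)|$ for fixed $r$, and that the new parameter depends only on $k$ and the constant $r$. Both hold by construction:
\begin{itemize}
\item each edge of $G$ contributes $O(r)$ vertices;
\item each original vertex contributes $O(r\cdot\deg)$ vertices;
\item the gadget at $s$ contributes $O(r)$ vertices.
\end{itemize}
With this checked, the two cases exhaust all $r\ge 2$, and the statement follows.
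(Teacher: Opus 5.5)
Your proposal is correct and follows the paper's own argument. The paper likewise obtains this statement by combining the even-$r$ lemma and the odd-$r$ lemma, each of which pairs membership (via the exact-distance-$r$ graph reduction to \textsc{Roman Domination}) with the matching hardness reduction from \textsc{Dominating Set}; your extra check that both constructions have size $O(r(n+m))$ and target weight $2k+2r$, and so are genuine fpt-reductions for each fixed $r$, is left implicit in the paper but does not change the route.
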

 
Unless ETH fails, the \textsc{Dominating Set} problem admits an algorithm running in time $2^{o(n+m)}$, where $n$ and $m$ are
the cardinalities of the vertex and edge sets of the input graph, respectively~\cite{cygan2015parameterized}. From Lemma~\ref{roman hop even W[2] complete} and Lemma~\ref{roman hop odd W[2] complete} we get the following result:

\begin{corollary}
   Unless ETH fails, the \textsc{$r$-Hop Roman Domination} problem does not have $2^{o(n+m)}$-time algorithm, where $n $ is the number of vertices and $m$ is the number of edges of the graph.
\end{corollary}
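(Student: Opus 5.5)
The plan is to transfer the ETH lower bound for \textsc{Dominating Set} through the reductions in the proofs of Lemma~\ref{roman hop even W[2] complete} and Lemma~\ref{roman hop odd W[2] complete}. (The sentence preceding the corollary omits the word ``no'': the known fact is that, unless ETH fails, \textsc{Dominating Set} admits \emph{no} $2^{o(n+m)}$-time algorithm.) Since these reductions are parameter-preserving and polynomial, the only thing to check is that they blow the instance size up by at most a linear factor. Then a $2^{o(n'+m')}$-time algorithm for \textsc{$r$-Hop Roman Domination} on the constructed $G'$ would yield a $2^{o(n+m)}$-time algorithm for \textsc{Dominating Set}.

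First, for fixed $r\ge 2$, I will count vertices and edges of $G'$ in terms of $n=|V(G)|$ and $m=|E(G)|$.

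\begin{itemize}
\item Each edge of $G$ is replaced by a path with $O(r)$ internal vertices; in the odd case a $K_5$ gadget with $O(1)$ extra vertices and edges is also added.
\item Each edge gadget is joined to the hub $s$ by a path of length $O(r)$.
\item The hub carries a fixed tail of $O(r)$ vertices: $s^1,\dots,s^{r-1}$ and the two pendant paths.
\item Each $u_i$ gets a copy $u_i'$, joined to the gadgets of the edges incident with $u_i$ by paths of length $O(r)$. This contributes $O(r\deg(u_i))$ per vertex, which sums to $O(rm)$.
\end{itemize}

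Altogether $n'+m' = O(r(n+m)) = O(n+m)$ since $r$ is a constant. The construction also runs in polynomial (indeed linear) time.

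Second, I will combine this with the correctness claims already proved: $G$ has a dominating set of size at most $k$ if and only if $G'$ has an $r$HRDF of weight at most $2k+2r$. Suppose algorithm $\mathcal{A}$ solves \textsc{$r$-Hop Roman Domination} in time $2^{o(n'+m')}$. On input $(G,k)$ we build $G'$ in polynomial time and run $\mathcal{A}$ on $(G',2k+2r)$. This takes $2^{o(O(n+m))}=2^{o(n+m)}$ time and decides \textsc{Dominating Set}, contradicting ETH. Both the even and odd cases are handled this way, so the statement holds for every $r\ge 2$.

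The main obstacle is making the size bound for the $u_i'$ attachments rigorous. For each incident edge, $u_i'$ is linked by its own subdivided path, so I must confirm these paths number $\deg(u_i)$ per vertex rather than something quadratic. I would check directly from steps (viii)/(ix) of the constructions that exactly one path of length $O(r)$ is added per pair $(u_i, e)$ with $e$ incident to $u_i$, i.e.\ $2m$ paths in total. Every other gadget is clearly linear in $n+m$.
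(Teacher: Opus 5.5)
Your proposal is correct and follows essentially the same route as the paper, which derives the corollary directly from the even and odd reductions from \textsc{Dominating Set} in Section~\ref{Roman_Hop_Domination}, together with the $2^{o(n+m)}$ ETH lower bound for \textsc{Dominating Set}. Beyond the paper, you spell out the $O(r(n+m))$ size bound on $G'$, which the paper leaves implicit here although it gives the analogous count for the step and hop variants, and you correctly note the missing ``no'' in the sentence preceding the corollary.
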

The classical \textsc{Dominating Set} problem cannot
be approximated to within a factor of $(1 - \epsilon)\log n$ in polynomial time for any constant $\epsilon > 0$ unless $NP \subseteq DTIME(n^{O(\log \log n})$~\cite{chlebik2008approximation}.  From Lemma~\ref{roman hop even W[2] complete} and Lemma~\ref{roman hop odd W[2] complete} we get the following result:

\begin{corollary}
The   \textsc{$r$-Hop Roman Domination} problem can not be approximated to within a factor of $(1- \epsilon)\log$ $n$, where $n$ is the number of vertices of the graph.
\end{corollary}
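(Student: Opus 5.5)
The corollary should follow from the reductions in Lemma~\ref{roman hop even W[2] complete} and Lemma~\ref{roman hop odd W[2] complete}, combined with the known inapproximability of \textsc{Dominating Set}. The key observation is that the reduction is not merely parameter-preserving. It is an \emph{additive} transformation of optimal values:
\[
\gamma_{rHR}(G') = 2\gamma(G) + 2r .
\]
The inequality $\le$ comes from the (if) direction of the claim. The inequality $\ge$ comes from the (only if) direction, read as a constructive procedure: from any $r$HRDF $f$ of $G'$ of weight $w$ it produces, in polynomial time, a dominating set $T$ of $G$ with $|T|\le (w-2r)/2$.

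First, I would isolate this constructive content explicitly. Given $f$, charge weight at least $2r$ to the gadget $S$ hanging at $s^{r-1}$, since the two pendant $r$-paths force this. Then apply the listed local modifications, which never increase weight and move all remaining label-$2$ mass onto the original vertices $u_i$. Finally, take $T=\{v_i : f(u_i)=2\}$. Every step is polynomial, so any polynomial-time $\alpha$-approximation for \textsc{$r$-Hop Roman Domination} yields a dominating set of $G$ of size at most
\[
\frac{\alpha(2\gamma(G)+2r)-2r}{2} \le \alpha\gamma(G) + (\alpha-1)r .
\]

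Second, I would handle the additive term and the change in instance size. The additive term $(\alpha-1)r$ is negligible whenever $\gamma(G)$ is large. Instances with small $\gamma(G)$, say $\gamma(G)\le c$ for a constant $c$, can be solved exactly by brute force in time $n^{c}$. The new graph has $n'=|V(G')|=O(r(n+m))=O(n^2)$ vertices, so $\log n' \le 2\log n + O(1)$. This factor $2$ is the main obstacle: a $(1-\epsilon)\log n'$ approximation only translates to roughly $2(1-\epsilon)\log n$ for \textsc{Dominating Set}, which is not a contradiction.

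To fix this, I would reduce from \textsc{Set Cover} or \textsc{Dominating Set} instances where $m$ is polynomially controlled, or equivalently invoke the hardness in terms of $\log N$ for the number of elements to be dominated. The hard instances of Feige / Dinur--Steurer are set-cover instances whose hardness is $(1-\epsilon)\ln N$ in the universe size $N$. Re-encoding them so that the universe corresponds to the vertices $u_i$, the subdivision vertices contributing only lower-order terms, gives $\log n' = (1+o(1))\log N$ after padding the universe size polynomially.

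Combining these steps, a $(1-\epsilon)\log n'$ approximation for \textsc{$r$-Hop Roman Domination} would give a $(1-\epsilon')\log N$ approximation for the hard covering instances. This is impossible unless $\textsc{P}=\textsc{NP}$, by Dinur--Steurer, or under the weaker $\textsc{NP}\not\subseteq DTIME(n^{O(\log\log n)})$ assumption cited in the paper. I expect the size and log-blowup bookkeeping of this last step to be the delicate part. The value-preservation step is essentially already contained in the claims.
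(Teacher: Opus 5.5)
Your route is the paper's. Its whole proof is the remark that the corollary follows from the even-$r$ and odd-$r$ reduction lemmas together with the $(1-\epsilon)\log n$ inapproximability of \textsc{Dominating Set}. You are more careful than the paper in three ways:
\begin{itemize}
\item You make explicit the value relation $\gamma_{rHR}(G')=2\gamma(G)+2r$ and the need for a polynomial-time extraction of a dominating set.
\item You absorb the additive $2r$ by brute force when $\gamma(G)$ is small.
\item Most importantly, you notice that $|V(G')|=\Theta(n+rm)$. On dense inputs $\log|V(G')|\approx 2\log n$, so the naive transfer only gives roughly $(\tfrac12-\epsilon)\log|V(G')|$.
\end{itemize}
The paper passes over this last point silently, and your diagnosis of it is correct.

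The gap is in your repair of that point.
\begin{itemize}
\item Asking for $m$ to be \emph{polynomially controlled} is not enough. Any $m=n^{c}$ with $c>1$ reproduces the same constant-factor loss. What you need is $|V(G)|+|E(G)|\le N^{1+o(1)}$, where $(1-\epsilon)\log N$ is the gap of the source instance.
\item Padding the universe goes the wrong way. Dummy or copied elements enlarge $N$ but leave the optimum, and hence the gap, unchanged. So $(1-\epsilon)\log N_{\mathrm{orig}}$ becomes a \emph{smaller} constant times $\log N_{\mathrm{padded}}$.
\item The assertion that the subdivision vertices are lower order is exactly what has to be proved. There are $\Theta(r|E(G)|)$ of them, which is at least the number of element--set incidences.
\end{itemize}

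What closes the step is a Set Cover hardness of $(1-\epsilon)\log N$ in which $N$ is the \emph{total} instance size, incidences included. The Dinur--Steurer NP-hardness is usually stated in this form. Encode such an instance as \textsc{Dominating Set} via the element--set incidence graph, plus a hub vertex adjacent to all set vertices and carrying a pendant neighbour. Do not use a clique on the set vertices, which would cost $M^2$ edges. Then $\gamma(G)=\mathrm{OPT}+1$ and $|V(G)|+|E(G)|=O(N)$. Hence $|V(G')|=O(rN)$ and $\log|V(G')|\le\log N+O(1)$. Your value-preservation computation now turns a $(1-\epsilon)\log|V(G')|$-approximation for \textsc{$r$-Hop Roman Domination} into a $(1-\epsilon/2)\log N$-approximation for Set Cover on instances with large optimum. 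That is the desired contradiction.
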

From Lemma~\ref{roman hop w[2]} we get the following result:

\begin{corollary}\label{exp algo hop}
If there exists an $O(\alpha^n)$-time algorithm for computing a minimum 
{dominating set}, then there also exists an $O(\alpha^n)$-time 
algorithm for computing a minimum {$r$-hop Roman dominating set}, where $n$ is the number of vertices of the graph.
\end{corollary}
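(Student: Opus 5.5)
The plan has two steps. First, turn the $r$-hop Roman problem into ordinary Roman domination on an $n$-vertex graph. Second, solve that Roman instance with the assumed dominating-set machinery without increasing the number of vertices.

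\textbf{Step 1: reduction to ordinary Roman domination.} Given $G$ on $n$ vertices, I would build the exact-distance graph $G_r$ exactly as in the proof of Lemma~\ref{roman hop w[2]}. Its vertices are $u_1,\dots,u_n$, and $u_ju_k$ is an edge iff $d_G(v_j,v_k)=r$. This takes polynomial time via $n$ breadth-first searches. The equivalence from Lemma~\ref{roman hop w[2]} says $f$ is an $r$HRDF of $G$ iff the same labelling is an RDF of $G_r$. Hence a minimum $r$-hop Roman dominating function of $G$ is exactly a minimum RDF of $G_r$. The key point is that $G_r$ still has $n$ vertices, so no exponent is lost here.

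\textbf{Step 2: Roman domination via the dominating-set algorithm.} For a fixed $V_2=D$ in a graph $H$, the cheapest RDF with that $V_2$ is obtained by putting all of $N_H[D]\setminus D$ into $V_0$ and all of $V\setminus N_H[D]$ into $V_1$. Its weight is therefore
\[
2|D| + |V\setminus N_H[D]|.
\]
So minimum Roman domination is a \emph{partial} dominating-set problem on the same vertex set. It can be phrased as a weighted covering problem:
\begin{itemize}
\item the universe is $V(H)$;
\item each closed neighbourhood $N_H[v]$ is a set of cost $2$;
\item each singleton $\{v\}$ is a set of cost $1$.
\end{itemize}
The singleton of cost $1$ is precisely the option of labelling $v$ with $1$.

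\textbf{Main obstacle.} The hard part is arguing that the assumed $O(\alpha^n)$ procedure handles this variant within the same base $\alpha$. The obvious gadget route, such as attaching a pendant copy of each vertex, doubles the vertex count and gives only $\alpha^{2n}$. I want to avoid that.

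What I would try first is to use the dominating-set algorithm in its standard form as a branching / measure-and-conquer set-cover algorithm on the $n$ sets $N_H[v]$ over the $n$ elements of $V(H)$. The running time of such algorithms is controlled by the number of sets and elements, and the extra cost-$1$ singletons add no new sets that need branching. Each element $v$ has an always-available cheap fallback of cost $1$, which the algorithm uses when no chosen neighbourhood covers $v$. At every leaf of the branching the residual cost $|V\setminus N_H[D]|$ is computed in polynomial time, and the reduction rules only need to check weights. The measure therefore stays $n$, and the running time stays $O(\alpha^n)$ up to polynomial factors.

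Combining this with Step 1 gives the stated $O(\alpha^n)$ bound for minimum $r$-hop Roman domination. I would flag explicitly that the step "the same branching bound survives adding cost-$1$ singletons" is where a careful justification must be given.
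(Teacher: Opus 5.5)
Your Step 1 is exactly the paper's argument. The corollary is justified there only by Lemma~\ref{roman hop w[2]}, i.e.\ by passing to the exact-distance graph $G_r$ on the same $n$ vertices, where $r$HRDFs of $G$ and RDFs of $G_r$ coincide. That step is correct, but it lands in \textsc{Roman Domination}, whereas the hypothesis is about \textsc{Dominating Set}. You rightly flag that a bridge between the two is needed, one that keeps the instance at $n+O(1)$ vertices. The paper silently identifies the two problems at this point, as does the remark after the corollary that applies Iwata's algorithm. So the missing step is genuine, and it is exactly where your proposal has to do real work.

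Your Step 2 does not supply that bridge. The hypothesis only asserts that \emph{some} $O(\alpha^n)$ algorithm for minimum dominating set exists. The implication therefore has to use it as a black box, e.g.\ polynomially many calls on graphs with $n+O(1)$ vertices. Assuming it is a branching/measure-and-conquer set-cover algorithm and changing its internals proves at most a statement about that particular algorithm. Even for a concrete algorithm of that kind, ``the bound survives adding cost-$1$ singletons'' is not a routine check. Measure-and-conquer bounds rest on a measure weighting sets by size and elements by frequency, together with a recurrence system tied to specific reduction rules, and the weighted instance breaks those rules:
\begin{itemize}
\item the subset rule cannot discard $\{v\}\subseteq N_H[v]$, because the singleton is cheaper;
\item element subsumption fails, because each element now lies in its own private singleton;
\item an element left in a single neighbourhood no longer forces that set;
\item every frequency goes up by one.
\end{itemize}
``The measure stays $n$'' is therefore unsupported. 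Redoing the analysis would give some base $\beta$ with no reason to equal $\alpha$, and the pendant-copy gadget you mention gives only $\alpha^{2n}$. What your Step 1 (and the paper's lemma) actually establishes is the corollary with \textsc{Roman Domination} in the hypothesis: an $O(\alpha^n)$-time algorithm for minimum Roman domination yields one for minimum $r$-hop Roman domination.
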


\noindent\textbf{Remark.}
Currently, the fastest known exact algorithm for 
\textsc{Dominating Set} is due to Iwata~\cite{iwata2011faster}, 
which runs in $O(1.4864^n)$ time using polynomial space. 
Therefore, by Corollary~\ref{exp algo hop}, 
there exists an $O(1.4864^n)$-time algorithm for computing a minimum 
{$r$-hop Roman dominating set} on an $n$-vertex graph.

\section{$r$-Step Domination (Proof of Theorem~\ref{thm:$r$-step domination restricted})}\label{sec:rStepDomination}


    
    

First we prove that the \textsc{$r$-Step Domination} problem is in \textsc{W[2]}. To show that we reduce our problem to the classical \textsc{Total Dominating Set} problem, which is \textsc{W[2]}-complete~\cite{henning2013total}.

\medskip
\noindent
\fbox{%
  \begin{minipage}{\dimexpr\linewidth-2\fboxsep-2\fboxrule}
  \textsc{ Total Dominating Set} problem
  \begin{itemize}[leftmargin=1.5em]
    \item \textbf{Input:} An undirected graph $G = (V, E)$ and an integer $k \in \mathbb{N}$.
    \item \textbf{Question:} Does there exist a \emph{total dominating set} $S \subseteq V$ of size at most $k$; that is, a set of at most $k$ vertices such that every vertex in $V$ has at least one neighbor in $S$?
  \end{itemize}
  \end{minipage}%
}
\medskip

\begin{lemma}\label{r step w[2]}
For each $r\geq 2$, the \textsc{$r$-Step Domination} problem is in \textsc{W[2]}.
\end{lemma}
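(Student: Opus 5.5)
The plan is to give a parameterized reduction from \textsc{$r$-Step Domination} to \textsc{Total Dominating Set}, mirroring the proof of Lemma~\ref{roman hop w[2]}. Since \textsc{Total Dominating Set} is in \textsc{W[2]}~\cite{henning2013total}, membership then follows because \textsc{W[2]} is closed under fpt-reductions.

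Given an instance $(G,k)$ with $V(G)=\{v_1,\dots,v_n\}$, I will build the \emph{exact-distance-$r$ graph} $G'$. Its vertex set is $\{u_1,\dots,u_n\}$, and $(u_i,u_j)\in E(G')$ exactly when $d_G(v_i,v_j)=r$. The output instance is $(G',k)$, with the parameter unchanged.

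Computing $G'$ takes polynomial time: one BFS from each vertex of $G$ gives all pairwise distances in $O(n(n+m))$ time. Because $r\ge 2$, we never have $d_G(v_i,v_i)=r$, so $G'$ has no loops and is a simple graph.

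For correctness, I will show that $S\subseteq V(G)$ is an $r$-step dominating set of $G$ if and only if $S'=\{u_i : v_i\in S\}$ is a total dominating set of $G'$. This is a direct unfolding of definitions. Every vertex $v_j$ of $G$ (including those in $S$) has some $v_i\in S$ with $d_G(v_i,v_j)=r$. This holds exactly when every vertex $u_j$ of $G'$ has a neighbour $u_i\in S'$.

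The point to stress is that $r$-step domination requires \emph{every} vertex, including members of $S$, to be dominated at distance exactly $r$. A vertex is also never at distance $r$ from itself. These two facts are precisely why the target must be \emph{total} domination rather than ordinary domination. The correspondence $S\leftrightarrow S'$ preserves cardinality, so $(G,k)$ is a yes-instance if and only if $(G',k)$ is.

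There is no real obstacle here. The only care needed is to note that isolated vertices of $G'$ correspond to vertices of $G$ with no vertex at distance exactly $r$. In that case both instances are no-instances, so the equivalence still holds.
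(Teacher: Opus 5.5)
Your proposal is correct and follows the same route as the paper: you build the exact-distance-$r$ graph $G'$ on $\{u_1,\dots,u_n\}$ and use that $r$-step dominating sets of $G$ correspond, with equal size, to total dominating sets of $G'$, so membership in \textsc{W[2]} follows from \textsc{Total Dominating Set}. Your explicit remarks that $G'$ is loopless for $r\ge 2$, and that total (rather than ordinary) domination is needed because members of $S$ must themselves be dominated, make the argument clearer than the paper's contradiction-style write-up.
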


\begin{proof}
Let $(G,k)$ be an instance of the \textsc{$r$-Step Domination} problem. Let $V(G) = \{v_1, v_2, \dots, v_n\}$, we construct a new graph $G'$ as follows:

    \begin{enumerate}[]
        \item For each vertex $v_i \in V(G)$, introduce a corresponding vertex $u_i \in V(G')$.
        \item For every pair of vertices $v_j, v_k \in V(G)$ such that $d_G(v_j,v_k)=r$, 
we add the edge $(u_j,u_k)$ to $E(G')$.
    \end{enumerate}
    
    
    We now show that $G$ has a $r$-step dominating set of size $k$ if and only if $G'$ has a total dominating set of size $k$.
     Let $G$ has a $r$-step dominating set $S_1= \{v_1,v_2,\dots,v_k\}$, then $S_2=\{u_1,u_2,\dots,u_k\}$ is a total dominating set. If not, there exists a vertex $u_l \in V(G')$, which is not dominated by $S_2$. This contradicts $S_1$ is an $r$-step dominating set of $G$.
    Conversely, let  $G'$ has a total dominating set $S_2= \{u_1,u_2,\dots,u_k\}$, then $S_1= \{v_1,v_2,\dots,v_k\}$ is a $r$-step dominating set. If not, there exists a vertex $v_l \in V(G)$ which is not $r$-step dominated by $S_1$. This contradicts $S_2$ is a total dominating set of $G$.
\end{proof}

To prove that \textsc{$r$-Step  Domination} Problem is \textsc{W[2]}-hard, even for bipartite graphs and also chordal graphs, we reduce from the \textsc{Domination} problem in general graphs, which is \textsc{W[2]}-hard.

\medskip
\noindent
\fbox{%
  \begin{minipage}{\dimexpr\linewidth-2\fboxsep-2\fboxrule}
  \textsc{ Dominating Set} problem
  \begin{itemize}[leftmargin=1.5em]
    \item \textbf{Input:} An undirected graph $G = (V, E)$ and an integer $k \in \mathbb{N}$.
    \item \textbf{Question:} Does there exist a \emph{dominating set} $S \subseteq V$ of size at most $k$?
  \end{itemize}
  \end{minipage}%
}
\medskip

\begin{restatable}{lemma}{rstepdominationbipartite}\label{thm:$r$-step domination bipartite} For $r\geq 2$, \textsc{$r$-Step Domination} Problem is \textsc{W[2]}-hard, even on bipartite graphs.
\end{restatable}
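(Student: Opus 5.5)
The plan is an fpt-reduction from \textsc{Dominating Set} to \textsc{$r$-Step Domination} on bipartite graphs. The new parameter will be $k' = k + c(r)$ for a constant $c(r)$ depending only on $r$. The idea is a ``two-layer'' construction: one layer of vertices represents the possible choices of dominators, and the other represents the vertices that must be dominated. Closed neighbourhoods in $G$ are encoded as ``distance exactly $r$'' in the new graph $H$.

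\textbf{Construction.} Given $(G,k)$ with $V(G)=\{v_1,\dots,v_n\}$:
\begin{enumerate}
\item For each $v_i$ create a \emph{selector} vertex $a_i$ and a \emph{target} vertex $b_i$.
\item For every pair with $v_j \in N_G[v_i]$, join $a_i$ to $b_j$ by a path of length exactly $r$ with fresh internal vertices.
\end{enumerate}
Every cycle of $H$ is a concatenation of an even number of these $a$--$b$ paths, so it has even length and $H$ is bipartite.

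\textbf{Distance claim.} I will show $d_H(a_i,b_j)=r$ if $v_j\in N_G[v_i]$, and $d_H(a_i,b_j)\ge 3r$ otherwise. The reason is that any $a$--$b$ walk avoiding a direct path must traverse at least three of the length-$r$ paths. The same claim gives $d_H(a_i,a_l)\ge 2r$ and $d_H(b_j,b_l)\ge 2r$. Consequently, a set $D\subseteq V(G)$ dominates $G$ if and only if $\{a_i : v_i\in D\}$ $r$-step dominates all targets $b_j$.

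\textbf{Gadget.} Step domination requires \emph{every} vertex to be dominated, including the $a_i$ and the internal path vertices. So I would attach a gadget made of a hub vertex $s$ and a constant number of pendant paths of length $r$ (and $2r$) hanging from $s$, with $s$ joined to every $a_i$ by a path of length $r$. The gadget must satisfy three requirements:
\begin{enumerate}
\item The pendant leaves can only be $r$-step dominated by vertices of the gadget. A leaf at the end of a pendant path of length $r$ has essentially a unique vertex at distance $r$ on the gadget side, which forces at least $c(r)$ gadget vertices into any solution.
\item These $c(r)$ forced vertices $r$-step dominate all $a_i$, all internal vertices of the $a$--$b$ paths, and all vertices of the gadget itself. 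Here I would use the gadget vertex at distance $r-t$ from $s$ to reach the vertex at offset $t$.
\item The gadget creates no shortcut between any $a_i$ and any $b_j$. Since every route through $s$ adds at least $2r$ to such a path, the distance claim is preserved.
\end{enumerate}
All path lengths should be chosen so that the whole graph remains bipartite.

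\textbf{Correctness.} For the forward direction, a dominating set $D$ of size $k$ together with the $c(r)$ gadget vertices is an $r$-step dominating set of $H$ of size $k'$. For the reverse direction, take a solution $S$ of size at most $k'$. By the forcing argument it contains at least $c(r)$ gadget vertices, so at most $k$ vertices remain. Each remaining vertex $x\notin\{a_1,\dots,a_n\}$ that dominates some target is replaced by a selector $a_i$ whose set of $r$-step-dominated targets contains that of $x$. Concretely, if $x$ lies on the $a_i$--$b_j$ path, then the targets at distance exactly $r$ from $x$ are among those dominated by $a_i$ or $a_j$, and I will check this case by case on the position of $x$. Reading off the selectors then yields a dominating set of $G$ of size at most $k$.

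\textbf{Main obstacle.} The delicate step is item 2 of the gadget: covering \emph{all} auxiliary and internal vertices at distance \emph{exactly} $r$ with only a constant number of forced vertices, while keeping item 3 and bipartiteness. Internal path vertices sit at every offset $t\in\{1,\dots,r-1\}$. My first attempt is to hang, from $s$, pendant paths whose vertices occupy every distance $0,\dots,r$ from $s$, and then verify exact distances offset by offset. If that fails, my fallback is to let the selectors' own mutual distances handle the internal vertices. In that case I would accept that $a_i$ dominates internal vertices of neighbouring paths and add private pendant paths only at the targets.
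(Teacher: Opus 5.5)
Your core gadget is sound: it is bipartite, and the only vertices at distance exactly $r$ from $b_j$ are the $a_l$ with $v_l\in N_G[v_j]$, so the replacement step is not even needed. The gap is item~2 of the gadget, and with the attachment you chose it cannot be closed.

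\textbf{Why item~2 fails.} Suppose $s$ is joined to each $a_i$ by a path of length $r$. Take an internal vertex $x$ at offset $t\in\{1,\dots,r-1\}$ from $a_i$ on an $a_i$--$b_j$ path. Then $d_H(s,x)=r+t\ge r+1$. Every vertex $g$ hanging from $s$ therefore has $d_H(g,x)=d_H(g,s)+d_H(s,x)>r$. Your offset trick only reaches the connector paths and the $a_i$ themselves. The internal vertices of the $a$--$b$ paths must be dominated by unforced vertices, and these dominators are essentially private.

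\textbf{Counterexample.} Take $G=K_{1,n-1}$ with centre $c$ and $k=1$. Fix a leaf $\ell$ and let $x$ be the vertex at offset $1$ from $a_\ell$ on the $a_\ell$--$b_\ell$ path. The only vertices at distance exactly $r$ from $x$ are:
\begin{enumerate}
\item the vertex next to $b_c$ on the $a_\ell$--$b_c$ path;
\item the vertex next to $s$ on the $s$--$a_\ell$ connector;
\item the vertex next to $b_\ell$ on the $a_c$--$b_\ell$ path.
\end{enumerate}
All three are specific to $\ell$, whatever gadget hangs from $s$. So every $r$-step dominating set of $H$ has at least $n-1$ vertices. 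The forward direction thus fails for $k'=1+c(r)$ once $n$ is large.

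Your fallback does not help. In your graph $a_i$ is at distance exactly $r$ from no internal vertex of any $a$--$b$ path: those on its own paths are closer, all others are farther. Forcing pendant paths at the targets costs one solution vertex per target.

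\textbf{The missing idea.} Put the hub at distance $1$ from the selectors, not $r$, by making $s$ adjacent to every $a_i$.
\begin{itemize}
\item This creates only even cycles, so $H$ stays bipartite.
\item Every $a_i$--$b_j$ route through $s$ has length at least $r+2$. Hence $b_j$ is still $r$-step dominated exactly by $\{a_l : v_l\in N_G[v_j]\}$.
\item Every non-target core vertex now lies at distance between $1$ and $r$ from $s$, and every target lies at distance $r+1$.
\end{itemize}
Forced vertices at distances $0,\dots,r-1$ from $s$ then dominate everything except the targets. For example, use a path $s=p_1-p_2-\dots-p_{2r}$ with a pendant path of length $r$ at each $p_m$. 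This forces exactly $p_1,\dots,p_{2r}$ and gives $k'=k+2r$.

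This is essentially the paper's construction, with its hub $p_1$ adjacent to every $u_i$. The one difference is that the paper uses one shared path per target rather than one path per pair. Its target $u_j^r$ ends the path $u_j-u_j^1-\dots-u_j^r$, whose vertex $u_j^1$ is adjacent to $u_l$ for every $v_l\in N_{G_1}[v_j]$.
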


\begin{figure}[h]
    \centering
   \includegraphics[width=\textwidth]{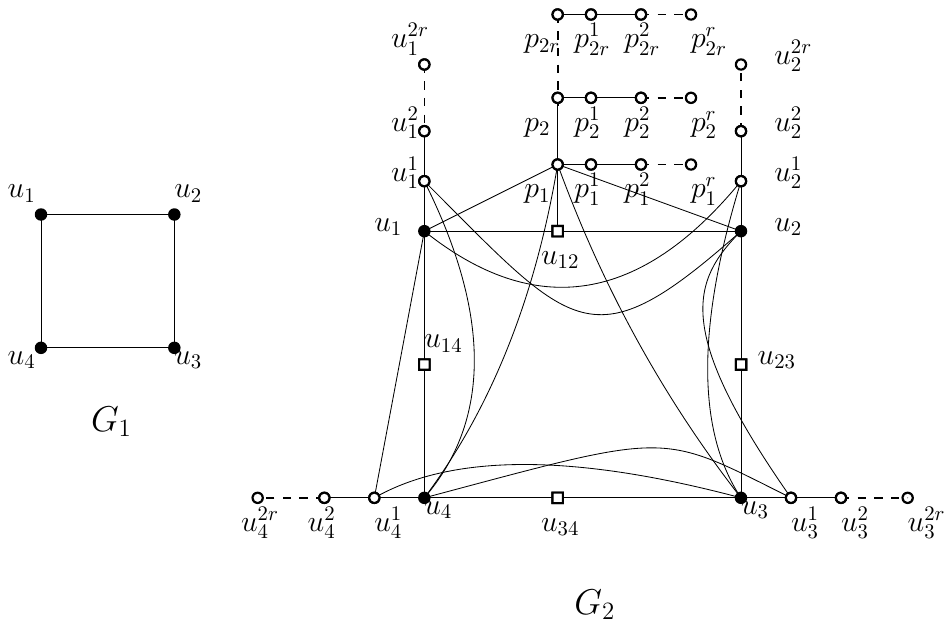}
    \caption{This is a pictorial description of the gadget construction of $G_2$ from $G_1$.}
    \label{r step hardness}
\end{figure}
\begin{proof} Given a graph $G_1$ with a vertex set $V(G_1) = \{v_1,v_2,\dots,v_n\}$, an edge set $E(G_1)$. We construct a new graph $G_2$ in the following way:
(i) We start by taking a copy of \( G_1 \) and denote the vertex set of \( G_2 \) as \( V(G_2) = \{u_1, u_2, \dots, u_n\} \), where each vertex \( u_i \in V(G_2) \) corresponds to \( v_i \in V(G_1) \). (ii) For each edge $(u_i,u_j)\in E(G_2)$ with $i<j$, we subdivide the edge by introducing a new vertex  $a_{ij}$. (iii) For each vertex $u_i$ we add $r$ new vertices $u_i^1,u_i^2,\dots,u_i^r$ and draw a path $u_i-u_i^1-u_i^2-\dots-u_i^r$. (iv) If there is an edge between $(v_i,v_j)\in E(G_1)$ we draw the edges $(u_i^1,u_j)$ and $(u_i,u_j^1)$. (v) We add another vertex $p_1$ and draw the edges $(u_i,p_1)$ for all $i=1,2,\dots,n$. (vi) We introduce $2r-1$ new vertices $p_2,p_2,\dots,p_{2r}$ and draw the path $p_2-p_3-\dots-p_{2r}$. (vii) We add a $r$-length path for each $p_i$  by introducing $r$ new vertices $p^1_i,p^2_i,\dots,p^r_i$ and forming the path $p_i-p^1_i-p^2_i-\dots-p^r_i$ and then join $p_i^1$ with $p_i$ by an edge. 

 \begin{claim}\label{claim r step bipartite}
       $G_1$ has a dominating set of size at most $k$, if and only if $G_2$ has an $r$-step dominating Set of size at most $k + 2r$, where $r\geq 2$.
    \end{claim}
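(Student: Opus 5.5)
The claim has two directions. The forward direction is an explicit construction; the backward direction is a normalization argument. Throughout I read the construction as intended: a pendant path $u_i-u_i^1-\dots-u_i^r$ at every original vertex, an apex $p_1$ adjacent to all $u_i$, a path $p_2-\dots-p_{2r}$, and a pendant $r$-path $p_i-p_i^1-\dots-p_i^r$ at each $p_i$.

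\textbf{Forward direction.} Let $D$ be a dominating set of $G_1$ with $|D|\le k$. I take
\[
S=\{u_i^r : v_i\in D\}\cup P,
\]
where $P$ consists of $2r$ gadget vertices, one chosen for each pendant $r$-path at $p_1,\dots,p_{2r}$; the natural choice is the tip $p_i^r$. The justification for $u_i^r$ is a distance computation. The vertex $u_i^r$ is at distance exactly $r$ from $u_i$, since the pendant path is a shortest path. Through the edges $(u_i^1,u_j)$ of step (iv), it is also at distance exactly $r$ from every $u_j$ with $v_iv_j\in E(G_1)$. So every $u_j$ is $r$-step dominated, because $D$ dominates $G_1$.

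The vertices that remain to be covered are $a_{ij}$, the internal pendant vertices $u_i^t$, $p_1$, and all $p_i$-gadget vertices. For these I check that some tip $p_i^r$, or another vertex of $S$, lies at distance exactly $r$. The point of the long spine $p_2,\dots,p_{2r}$ joined through $p_1$ is that, for every distance $t\le r$ from $p_1$, some tip lies at distance exactly $r$. I verify this by listing distances from each $p_i^r$ along the spine and into the $G_1$-part. This is routine but fiddly, and I will tabulate it.

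\textbf{Backward direction.} Let $S$ be an $r$-step dominating set with $|S|\le k+2r$. First I show that each pendant gadget at $p_i$ forces a distinct vertex of $S$. The tip $p_i^r$ has all vertices at distance exactly $r$ lying inside its own gadget, or at $p_i$ and a short stretch of the spine near $p_i$. I intend to choose the spine length so that these witness sets are pairwise disjoint for different $i$. That gives $2r$ forced vertices, none of them in the $G_1$-part, so at most $k$ vertices of $S$ remain for the rest of the graph.

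Next, for each original vertex $u_j$, I consider the vertices at distance exactly $r$ from it. Any such vertex outside the spine gadgets is one of three kinds: $u_i^r$ with $i=j$ or $v_iv_j\in E(G_1)$; some $u_i^{t}$ or $a$-vertex whose $r$-step neighbourhood among the $u$'s is contained in $\{u_\ell: v_\ell\in N[v_i]\}$ for some $i$; or a spine vertex. I replace each remaining $S$-vertex $x$ by $u_i^r$ for an appropriate index $i$. The set $T=\{v_i\}$ obtained this way has size at most $k$ and dominates $G_1$.

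\textbf{Main obstacle.} The delicate part is showing that spine and $p_1$ vertices cannot $r$-step dominate any $u_j$. This is exactly where the $2r$ forced vertices must be argued to be useless for the original vertices. Distances from $p_1$ to every $u_j$ equal $1$, so the spine vertices are at uniform distance from all $u_j$. I will handle this with a counting argument: if some forced gadget vertex did dominate the $u_j$'s, I would show that it fails to cover its own tip, which contradicts the disjointness of the witness sets. If that disjointness fails for small $r$, I would fall back on treating $r=2$ separately by direct inspection. Bipartiteness of $G_2$ then follows by exhibiting the 2-colouring: the $u$'s on one side, the $a_{ij}$ and $u_i^1$ on the other, with path parities alternating.
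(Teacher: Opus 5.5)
There is a genuine gap: in both directions you work with the wrong end of every pendant path.

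\textbf{Forward direction.} The path $p_i-p_i^1-\dots-p_i^r$ meets the rest of $G_2$ only at $p_i$. So every vertex off this path is at distance more than $r$ from $p_i^r$, and every vertex on it other than $p_i$ is at distance less than $r$. Hence $p_i$ is the \emph{only} vertex at distance exactly $r$ from $p_i^r$. Every root $p_i$ is therefore forced into any $r$-step dominating set, and a tip $p_i^r$ dominates nothing except its own root. Your set contains no $p_i$, so the tips $p_i^r$ are undominated, and your planned table of distances from the tips will find nothing else at distance $r$.

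The same happens on the $G_1$ side. The vertices $u_j^2,\dots,u_j^r$ have degree at most two, and $u_j^1$ is adjacent exactly to $u_j^2$, to $u_j$, and to the $u_\ell$ with $v_\ell v_j\in E(G_1)$ (step (iv)). So the vertices at distance exactly $r$ from $u_j^r$ are precisely the $u_\ell$ with $v_\ell\in N_{G_1}[v_j]$. Your set contains no $u_\ell$, so no tip $u_j^r$ is dominated. These tips are exactly what is missing from your list of vertices ``that remain to be covered''. Your computation that $u_i^r$ is at distance $r$ from the $u_j$ is correct, but it covers the $u_j$, which the forced roots already cover, instead of the tips $u_j^r$.

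The right witness is $\{p_1,\dots,p_{2r}\}\cup\{u_i : v_i\in D\}$. With the spine attached at $p_1$, the roots cover everything except the tips $u_j^r$, and the chosen $u_i$ cover those tips.

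\textbf{Backward direction.} This targets the wrong vertices for the same reason. Since $p_i$ is the only possible dominator of $p_i^r$, all $2r$ roots are forced without any tuning of the spine, leaving at most $k$ further vertices. The information about $G_1$ must then come from the tips $u_j^r$. Each of them needs a vertex of $S$ in $\{u_\ell : v_\ell\in N_{G_1}[v_j]\}$, so $\{v_\ell : u_\ell\in S\}$ is directly a dominating set of $G_1$ of size at most $k$, with no replacement step.

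Your plan instead reads $T$ off how the original vertices $u_j$ are dominated. The ``main obstacle'' you intend to prove is false under your own reading. With the spine joined through $p_1$, the forced root at distance $r-1$ from $p_1$ (e.g.\ $p_r$ for the path $p_1-p_2-\dots-p_{2r}$) is at distance exactly $r$ from every $u_j$. So the $u_j$ are always dominated for free, and their domination carries no information about $G_1$. Moreover, replacing vertices of $S$ by tips $u_i^r$ would only produce vertices that dominate none of the $u_j^r$.
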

     
    \begin{claimproof}
        \textsf(If part) Suppose $G_1$ has a dominating set $T$ with $|T| \leq k$. Let $S_1 = \{p_1, p_2,\dots, p_{2r}\}$. This set $r$-step dominates all vertices in $V(G_2) \setminus \{u_i^r \mid i = 1, \dots, n\}$. The remaining vertices $\{u_i^r \mid i = 1, \dots, n\}$ are $r$-step dominated by $T' = \{u_i \mid v_i \in T\}$. Thus, $G_2$ has a $r$-step dominating Set of size $\leq k + 2r$.

         \textsf{(Only-if part)} Suppose $G_2$ has a $r$-step dominating Set $D$ with $|D| \leq k + 2r$. In order to $r$-step dominate the vertex $p_i^r \in G_2$, $p_i$ must be in $D$. The set $\{p_1,p_2,\dots,p_{2r}\}$ hop dominate all the vertices in $V(G_2)$ except $\{u_1^r,u_2^r,\dots,u_n^r\}$. Thus the vertices $\{u_i^r \mid i = 1, \dots, n\}$ are $r$-step dominated by a set $T' \subseteq D$ with $|T'| \leq k$.
             We can modify $T'$ to contain only vertices from $\{u_1, \dots, u_n\}$. 
             Let $T = \{v_i \mid u_i \in T'\}$. We show that $T$ is a dominating set for $G_1$.

        \medskip

         Suppose there exists $v_k \in V(G_1)$ not dominated by $T$. Then $u_k^r \in V(G_2)$  must be $r$-step dominated by some $x \in D$. Now $x = u_j$, for some $j$, then $d_{G_1}(v_k, v_j) \leq 1$, contradicting that $v_k$ is not dominated.
    \end{claimproof}
     
For each $r\geq 2$, the \textsc{$r$-Step Domination} Problem is \textsc{W[2]}-hard, even on bipartite graphs.
    \end{proof}

 \begin{restatable}{lemma}{rstepdominationchordal}\label{thm:$r$-step domination chordal} For each $r\geq 2$, the \textsc{$r$-Step Domination} problem is \textsc{W[2]}-hard, even on chordal graphs.
\end{restatable}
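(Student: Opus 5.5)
The plan is to mimic the proof of Lemma~\ref{thm:$r$-step domination bipartite}, reducing from \textsc{Dominating Set}. This time the copy of $G_1$ is turned into a clique, and every other vertex is added either as a simplicial vertex or on a pendant path, so the resulting graph $G_2$ is chordal.

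\textbf{Construction.} Given $(G_1,k)$ with $V(G_1)=\{v_1,\dots,v_n\}$:
\begin{enumerate}
\item Let $K=\{u_1,\dots,u_n\}$ be a clique.
\item For each $i$, add a vertex $x_i$ adjacent exactly to $\{u_j : v_j\in N_{G_1}[v_i]\}$. Since $N(x_i)\subseteq K$ is a clique, $x_i$ is simplicial.
\item Attach to $x_i$ a pendant path $x_i-y_i^1-\dots-y_i^{r-1}$, and put $w_i=y_i^{r-1}$.
\item Add a hub $z_1$ adjacent to all of $K$, and a path $z_1-z_2-\dots-z_{2r}$.
\item To each $z_a$ attach a pendant path $z_a-q_a^1-\dots-q_a^r$.
\end{enumerate}
Steps (3)--(5) only add simplicial vertices, pendant paths, and a vertex whose neighbourhood is the clique $K$. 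Hence $G_2$ is chordal by a perfect elimination ordering that removes the pendant paths first, then the $x_i$, then $z_1$. We ask for an $r$-step dominating set of size at most $k+2r$; this is a parameterized reduction with new parameter $k+2r$.

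\textbf{Distance bookkeeping.} Every $u_j$ is at distance $1$ from $z_1$. From this:
\begin{itemize}
\item $d(z_a,u_j)=a$, $d(z_a,x_i)=a+1$, and $d(z_a,y_i^t)=a+1+t$.
\item $d(u_j,w_i)=r$ if $v_j\in N_{G_1}[v_i]$, and $d(u_j,w_i)=r+1$ otherwise. In the second case the path goes through a clique neighbour of $x_i$, which exists because $v_i\in N[v_i]$.
\end{itemize}

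\textbf{Completeness.} Take a dominating set $T$ of $G_1$ and let $D=\{z_1,\dots,z_{2r}\}\cup\{u_j: v_j\in T\}$.
\begin{itemize}
\item $z_{r-t-1}$ dominates $y_i^t$ for $1\le t\le r-2$.
\item $z_{r-1}$ dominates $x_i$, and $z_r$ dominates every $u_j$.
\item $u_j$ dominates $w_i$ whenever $v_j\in N[v_i]$.
\item $z_a$ dominates $q_a^r$.
\item The remaining tail and path vertices $q_a^t$ and $z_b$ are dominated by suitable $z_{a\pm(r-t)}$ or $z_{b\pm r}$; the path has length $2r$ so that such an index always exists.
\end{itemize}
For $r=2$ the path indices degenerate ($y_i^1=w_i$), so I would check that case separately by hand.

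\textbf{Soundness.}
\begin{itemize}
\item Every vertex outside the tail of $z_a$ satisfies $d(q_a^r,v)=r+d(z_a,v)$, and every vertex inside that tail is at distance less than $r$ from $q_a^r$. So $z_a$ is the only vertex at distance exactly $r$ from $q_a^r$, and all of $z_1,\dots,z_{2r}$ lie in $D$.
\item This leaves at most $k$ further vertices $D'$, which must dominate every $w_i$ (the $z_a$ are at distance $\ge r+1$ from $w_i$).
\item Vertices on $w_i$'s own tail are at distance less than $r$ from it. Every $x_{i'}$ and $y_{i'}^t$ with $i'\neq i$, and every $q_a^t$, is at distance greater than $r$ from $w_i$.
\item Hence each $w_i$ is dominated by some $u_j\in D'$ with $v_j\in N[v_i]$, and $\{v_j: u_j\in D'\}$ is a dominating set of $G_1$ of size at most $k$.
\end{itemize}

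\textbf{Main obstacle.} The main obstacle is the exact-distance verification, especially making sure the gadget vertices really are all $r$-step dominated by $\{z_a\}$. Unlike ordinary domination, a vertex cannot dominate itself, and nearby vertices do not dominate each other. I expect to have to tune the length of the $z$-path, and possibly add a second tail at each $z_a$, so that every gadget vertex has some $z_b$ at distance exactly $r$. W[2]-membership then follows from Lemma~\ref{r step w[2]}.
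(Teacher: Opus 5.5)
Your proposal is correct and is essentially the paper's construction. Your $x_i,y_i^1,\dots,y_i^{r-1}$ are the paper's $u_i^1,\dots,u_i^r$ (with $u_i^1$ adjacent to the clique copies of $N_{G_1}[v_i]$), your $z$-path with length-$r$ tails is the paper's $p$-gadget, and your soundness argument follows the same lines: the tails force all $2r$ path vertices into $D$, and only clique copies $u_j$ with $v_j\in N[v_i]$ are at distance exactly $r$ from $w_i$. Your explicit distance checks go through as written, including $r=2$, so the re-tuning you anticipate in your last paragraph is unnecessary.
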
   
\begin{proof}
    
\begin{figure}[h]
    \centering
   \includegraphics[width=\textwidth]{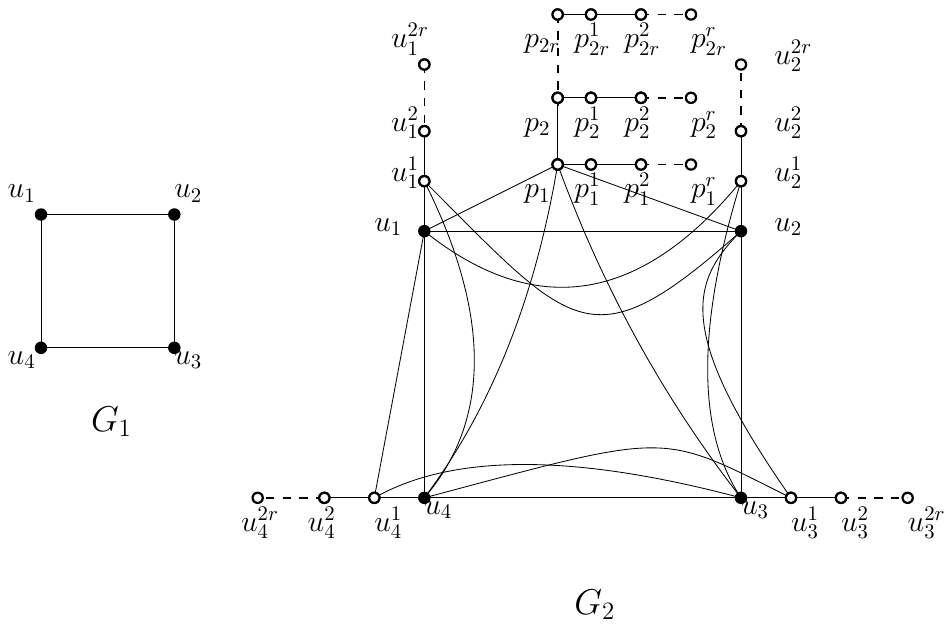}
    \caption{This is a pictorial description of the gadget construction of $G_2$ from $G_1$. Note that, the induced subgraph on the set of vertices $\{ u_i \}$ is a clique.}
    \label{r step hardness}
\end{figure}

Given a graph $G_1$ with a vertex set $V(G_1) = \{v_1,v_2,\dots,v_n\}$, an edge set $E(G_1)$. We construct a new graph $G_2$ in the following way:
(i) We start by taking a copy of \( G_1 \), denoting the vertex set of \( G_2 \) as \( V(G_2) = \{u_1, u_2, \dots, u_n\} \), where each vertex \( u_i \in V(G_2) \) corresponds to \( v_i \in V(G_1) \). 
(ii) For each vertex $u_i$ we add $r$ new vertices $u_i^1,u_i^2,\dots,u_i^r$ and draw a path $u_i-u_i^1-u_i^2-\dots-u_i^r$. (iii) If there is an edge between $v_i$ and $v_j$, i.e $(v_i,v_j)\in E(G_1)$, we draw the edges $(u_i^1,u_j)$ and $(u_i,u_j^1)$. (iv) We add another vertex $p_1$ and draw the edges $(u_i,p_1)$ for all $i=1,2,\dots,n$. (v) We introduce $2r-1$ new vertices $p_2,p_2,\dots,p_{2r}$ and draw the path $p_2-p_3-\dots-p_{(2r-2)}$. (vi) We add a $r$-length path for each $p_i$  by introducing $r$ new vertices $p^1_i,p^2_i,\dots,p^r_i$ and forming the path $p_i-p^1_i-p^2_i-\dots-p^r_i$ and then join $p_i^1$ with $p_i$ by an edge. (vii) Lastly, the
induced subgraph on the set of vertices ${ui}$ is a clique. `

 \begin{claim}\label{claim r step chordal}
           $G_1$ has a dominating set of size at most $k$, if and only if $G_2$ has an $r$-step dominating Set of size at most $k + 2r$, where $r\geq 2$.
    \end{claim}
     
    \begin{claimproof}
        \textsf(If part) Suppose $G_1$ has a dominating set $T$ with $|T| \leq k$. Let $S_1 = \{p_1, p_2,\dots, p_{2r}\}$. This set $r$-step dominates all vertices in $V(G_2) \setminus \{u_i^r \mid i = 1, \dots, n\}$. The remaining vertices $\{u_i^r \mid i = 1, \dots, n\}$ are $r$-step dominated by $T' = \{u_i \mid v_i \in T\}$. Thus, $G_2$ has a $r$-step dominating set of size $\leq k + 2r$.

         \textsf{(Only-if part)} Suppose $G_2$ has a $r$-step dominating set $D$ with $|D| \leq k + 2r$. In order to $r$-step dominate the vertex $p_i^r \in G_2$, $p_i$ must be in $D$. The set $\{p_1,p_2,\dots,p_{2r}\}$ hop dominate all the vertices in $V(G_2)$ except $\{u_1^r,u_2^r,\dots,u_n^r\}$. Thus the vertices $\{u_i^r \mid i = 1, \dots, n\}$ are $r$-step dominated by a set $T' \subseteq D$ with $|T'| \leq k$.
             We can modify $T'$ to contain only vertices from $\{u_1, \dots, u_n\}$. 
             Let $T = \{v_i \mid u_i \in T'\}$. We show that $T$ is a dominating set for $G_1$.

        \medskip

         Suppose there exists $v_k \in V(G_1)$ not dominated by $T$. Then $u_k^r \in V(G_2)$  must be $r$-step dominated by some $x \in D$. Now $x = u_j$, for some $j$, then $d_{G_1}(v_k, v_j) \leq 1$, contradicting that $v_k$ is not dominated. 
    \end{claimproof}
Hence, for each $r\geq 2$, the \textsc{$r$-Step Domination} Problem is \textsc{W[2]}-hard, even on chordal graphs.
\end{proof}

 Unless ETH fails, the Domination problem does not admit an algorithm working in time $2^{o(n+m)}$, where $n$ and $m$ are
the cardinalities of the vertex and edge sets of the input graph, respectively~\cite{cygan2015parameterized}. From Claim \ref{claim r step bipartite} we get the following result:
\begin{corollary}
   Unless ETH fails, the \textsc{$r$-Step Domination} problem does not have $2^{o(n+m)}$ algorithm, even for bipartite graphs and also chordal graphs, where $n $ is the number of vertices and $m$ is the number of edges of the graph.
\end{corollary}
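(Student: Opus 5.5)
The plan is to transfer the known ETH lower bound for \textsc{Dominating Set} through the reductions of Lemma~\ref{thm:$r$-step domination bipartite} and Lemma~\ref{thm:$r$-step domination chordal}. The whole argument rests on one size bound: for fixed $r$, each output instance must have $n'+m'=O(n+m)$, where $n,m$ are the sizes of the \textsc{Dominating Set} instance $G_1$. Then a $2^{o(n'+m')}$ algorithm for \textsc{$r$-Step Domination} gives, via Claim~\ref{claim r step bipartite} (resp.\ Claim~\ref{claim r step chordal}) and the polynomial construction time, a $2^{o(n+m)}$ algorithm for \textsc{Dominating Set}. That contradicts ETH by \cite{cygan2015parameterized}. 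The new parameter $k+2r$ plays no role here; only instance size matters.

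\textbf{Bipartite case.} I count directly in the construction of Lemma~\ref{thm:$r$-step domination bipartite}.
\begin{itemize}
\item Steps (i)--(ii) give $n+m$ vertices and $2m$ edges.
\item Step (iii) adds $rn$ vertices and $rn$ edges.
\item Step (iv) adds $2m$ edges.
\item Step (v) adds $n$ edges.
\item Steps (vi)--(vii) add $O(r^2)$ vertices and edges.
\end{itemize}
Since $r$ is a constant, $n'+m'=O(n+m)$. This yields the bipartite half of the corollary at once.

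\textbf{Chordal case.} This is the main obstacle. Step (vii) of Lemma~\ref{thm:$r$-step domination chordal} turns $\{u_1,\dots,u_n\}$ into a clique, so $m'=\Theta(n^2)$ even when $m=O(n)$. A $2^{o(n'+m')}$ bound therefore only gives $2^{o(n^2)}$ for \textsc{Dominating Set}, which contradicts nothing. The literal reduction does not suffice, and I will change the gadget so that the edge count stays linear while the graph stays chordal and the distances used in Claim~\ref{claim r step chordal} are preserved.

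\begin{enumerate}
\item \emph{Restrict the source instances.} I first reduce from \textsc{Dominating Set} on graphs of maximum degree at most $3$. By the sparsification lemma and the standard linear reduction from \textsc{3-SAT}, this problem still admits no $2^{o(n)}$ algorithm under ETH, and here $m=O(n)$.
\item \emph{Replace the clique.} Instead of a clique on the $u_i$, I only need every cycle through the $u_i$ to be triangulated. The clique is used solely to make every pair $u_i,u_j$ at distance $\le 1$. That is harmless for the distance-$r$ arguments, because $p_1$ is already adjacent to all $u_i$, so all such pairs are at distance $\le 2$ anyway. I will try to triangulate instead by a \emph{linear-size} chordal completion. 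Add the edges $p_1u_i^1$ as well, making $p_1$ a common apex. Then triangulate the cycles of the remaining graph using a fixed chordal completion of $G_1$ (for instance, a fill-in along an elimination ordering).
\item \emph{Handle the fill-in size.} A general elimination ordering can produce superlinear fill-in. For the bounded-degree instances from the first step, I would instead triangulate inside each gadget $\{u_i,u_i^1\}\cup\{u_j,u_j^1: v_iv_j\in E(G_1)\}$ with the apex $p_1$ attached. Each such gadget has constant size, so the total is $O(n)$ edges.
\item \emph{Re-verify the claim.} Finally I recheck Claim~\ref{claim r step chordal}: $d(u_j,u_k^r)=r$ iff $j=k$ or $v_jv_k\in E(G_1)$, and $\{p_1,\dots,p_{2r}\}$ still $r$-step dominates everything except the $u_i^r$. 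These distances depend only on the layers $u_i\to u_i^1\to\cdots$ and on $p_1$, not on the clique, which is why I expect them to survive.
\end{enumerate}

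I expect the third and fourth steps to be the delicate part. The gadget-wise triangulation must neither shorten $d(u_j,u_k^r)$ for non-adjacent $v_j,v_k$ nor leave a chordless cycle. If that fails, I would fall back to stating the chordal bound in terms of $n'$ only. That would be weaker than the corollary as worded, so the primary plan is the linear-size chordal gadget.
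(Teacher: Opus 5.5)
Your bipartite half is exactly the paper's argument. Your diagnosis of the chordal half is also correct. The paper's proof only counts the size of the bipartite construction and never returns to chordal graphs. Meanwhile the clique added in step (vii) of the chordal reduction makes $m'=\Theta(n^2)$, so no $2^{o(n'+m')}$ bound on chordal graphs follows from it.

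The gap is in your repair (steps 2--3), and no choice of fill-in, local or global, can fix it.
\begin{itemize}
\item Induced subgraphs of chordal graphs are chordal, and your modified $G_2$ still contains the copy of $G_1$ on $\{u_1,\dots,u_n\}$. Even without those edges, $G_1$ is a minor of $G_2$: contract every $u_iu_i^1$ and use the edges $u_i^1u_j$ that your step 4 needs for $d(u_j,u_k^r)=r$.
\item A chordal graph $H$ satisfies $\mathrm{tw}(H)=\omega(H)-1$, hence $|E(H)|\ge\binom{\mathrm{tw}(H)+1}{2}$. So $|E(G_2)|=O(n)$ forces $\mathrm{tw}(G_1)\le\mathrm{tw}(G_2)=O(\sqrt n)$.
\item The standard $3^{\mathrm{tw}}n^{O(1)}$ dynamic program would then solve your degree-$3$ source instances in $2^{O(\sqrt n)}n^{O(1)}$ time. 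That contradicts the very ETH bound you start from, so under ETH no such construction exists.
\item Concretely, cubic expanders have treewidth $\Theta(n)$, so every chordal supergraph of them has $\Omega(n^2)$ edges. Thus $O(n)$ chords placed in constant-size gadgets around each $u_i$ cannot yield a chordal graph.
\item The apex $p_1$ does not help, since a vertex off an induced cycle never supplies a chord of it.
\end{itemize}

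What the clique construction does give is your fallback: no $2^{o(n')}$ algorithm on chordal graphs, which is weaker than the corollary as worded. So your proposal, like the paper's proof, establishes only the bipartite half. The chordal half would need a genuinely different reduction, bearing in mind that any linear-size chordal target has treewidth $O(\sqrt{n+m})$.
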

\begin{proof}
    It is known that there is no $2^{o(n)}$-time algorithm for finding a minimum dominating set on $n$ vertices for a general graph, unless ETH fails~\cite{bacso2019subexponential}. Consider the construction that we have used to prove Lemma \ref{thm:$r$-step domination bipartite}. In the proof of Claim \ref{claim r step bipartite}, 
   the graph $G_1$ has $n$ vertex and $m$ edges. We constructed a bipartite graph $G_2$ with $O(n+m)$ vertices and $O(n+m)$ edges.  This reduction proves that a $2^{o(n+m)}$ -time algorithm for finding a minimum $r$-step dominating set on n vertices and m edges for a bipartite graph could be used to obtain a $2^{o(n+m)}$ time algorithm for finding minimum dominating set on general graphs  and this would violate the ETH. This reduction says that there is no $2^{o(n+m)}$-time algorithm for finding minimum $r$-step dominating set on $n$ vertices and $m$ edges for bipartite graphs, unless ETH fails. 
\end{proof}
Minimum \textsc{Dominating Set} problem cannot
be approximated to within a factor of $(1 - \epsilon)\log n$ in polynomial time for any constant $\epsilon > 0$ unless $NP \subseteq DTIME(n^{O(\log \log n})$~\cite{chlebik2008approximation}. Claim \ref{claim r step bipartite} and Claim \ref{claim r step chordal} we get the following result:
\begin{corollary}
For each $r\geq 2$, the  \textsc{$r$-Step Domination} problem cannot be approximated within a factor of $(1- \epsilon)\log n$, even for bipartite graphs and chordal graphs.
\end{corollary}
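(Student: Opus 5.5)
This corollary follows from Claim~\ref{claim r step bipartite} and Claim~\ref{claim r step chordal}: each gives a polynomial-time reduction that preserves the dominating-set value up to an additive constant. For a graph $G_1$ on $n$ vertices, the construction produces $G_2$ (bipartite in Lemma~\ref{thm:$r$-step domination bipartite}, chordal in Lemma~\ref{thm:$r$-step domination chordal}). The claims show $\gamma_{r\text{-step}}(G_2)=\gamma(G_1)+2r$. The forward direction maps a dominating set $T$ of $G_1$ to $T'\cup\{p_1,\dots,p_{2r}\}$. The only-if direction maps an $r$-step dominating set $D$ of $G_2$ back to a dominating set of $G_1$ of size at most $|D|-2r$, after pushing the non-forced vertices of $D$ onto the $u_i$'s. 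This backward map must be constructive and run in polynomial time; I would check that the ``modify $T'$'' step is an explicit local replacement, replacing any vertex that $r$-step dominates some $u_k^r$ by a suitable $u_j$.

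Suppose, for contradiction, that some polynomial-time algorithm $\mathcal{A}$ approximates \textsc{$r$-Step Domination} on bipartite (or chordal) graphs within ratio $\rho(N)=(1-\epsilon)\log N$, where $N=|V(G_2)|=O(n+m)\le O(n^2)$. Given $G_1$, build $G_2$, run $\mathcal{A}$ to get $D$ with $|D|\le \rho(N)\,(\gamma(G_1)+2r)$, and map back to a dominating set of size at most $\rho(N)(\gamma(G_1)+2r)-2r$. Two losses must be absorbed. The first is the additive $2r$. Since $r$ is a fixed constant, I would first dispose of instances with $\gamma(G_1)\le C$ for a large constant $C$ by exhaustive search in $n^{O(C)}$ time. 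On the remaining instances, $(\gamma+2r)/\gamma\le 1+2r/C$, a factor that can be made smaller than $1+\epsilon/4$.

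The second loss is the change from $\log N$ to $\log n$. Because $N\le cn^2$, we have $\log N\le 2\log n+O(1)$, which would only give a factor $2(1-\epsilon)$ — not good enough. This is the main obstacle, and I would first try to apply the hardness directly in terms of the size of the source instance. The Feige/Chleb\'ik--Chleb\'ikov\'a hardness for \textsc{Set Cover}/\textsc{Dominating Set} holds in the form $(1-\epsilon)\ln n$, where the instances can be taken with $m\le n^{1+o(1)}$ edges, or it can be restated in terms of $\log(n+m)$. Either way $\log N=(1+o(1))\log n$, and the factor transfers with $\epsilon$ replaced by $\epsilon/2$. Failing that, the fallback is to weaken the statement to a $c\log N$ bound for some constant $c$. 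Combining both estimates yields a $(1-\epsilon/2)\log n$-approximation for \textsc{Dominating Set} on general graphs, contradicting~\cite{chlebik2008approximation} (under $NP\not\subseteq DTIME(n^{O(\log\log n)})$, or under $P\ne NP$ via Dinur--Steurer). The same argument works verbatim for both graph classes.
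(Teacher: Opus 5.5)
Your argument takes the paper's route, which amounts to the single line ``from the two claims we get the result.'' You are more careful than the paper, which silently ignores both the additive $2r$ and the change from $n$ to $|V(G_2)|$. Your brute-force treatment of the additive term (exhaustive search when $\gamma(G_1)\le C$, then $(\gamma+2r)/\gamma\le 1+2r/C$) is correct.

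Your worry about the back-map resolves trivially. Read the $p$-path as $p_1-p_2-\cdots-p_{2r}$; the claims need this, although the construction as printed omits the edge $p_1p_2$. Since $u_k^2,\dots,u_k^r$ is a pendant path hanging from $u_k^1$, the vertices at distance exactly $r$ from $u_k^r$ are precisely the $u_j$ with $v_j\in N_{G_1}[v_k]$. Each $p_i$ is forced by $p_i^r$. So $\{v_j : u_j\in D\}$ is already a dominating set of size at most $|D|-2r$, with no modification step needed.

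The one step that is not justified as written is $\log N$ versus $\log n$. For the chordal construction the issue does not arise: $|V(G_2)|=(r+1)n+2r+2r^2$, so $\ln N=\ln n+O(1)$, and you could have said so instead of using $N\le O(n^2)$ for both classes. For the bipartite construction, the subdivision vertices $a_{ij}$ make $N=\Theta(n+m)$. Your main fix asserts that the Feige/Dinur--Steurer hardness holds on instances with $m\le n^{1+o(1)}$. That is not how the cited theorem is stated. It is true, but only after opening up the construction: there the number of sets is a tiny polynomial in the universe size and each element lies in few sets. As written it is an unproved assumption, and your fallback would only give a weaker bound.

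A self-contained fix is to change the bipartite gadget. Delete the edges $u_iu_j$ together with all $a_{ij}$; they play no role in the encoding. The graph stays bipartite, with the $u_i$ and the even-indexed $u_i^{\ell}$ on one side and the odd-indexed $u_i^{\ell}$ and $p_1$ on the other. The distance-$r$ set of $u_i^r$ is still $\{u_j : v_j\in N_{G_1}[v_i]\}$, since it is determined by $N(u_i^1)$. All distances from the $p_i$ to the remaining vertices are unchanged. Now $N=O(n)$, and your argument transfers the $(1-\epsilon)\ln n$ bound directly for both classes.
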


\section{$r$-Hop Domination (Proof of Theorem~\ref{thm:$r$-hop domination restricted})}\label{sec:rHopDomination}

First we prove that the \textsc{$r$-Hop Domination} problem is in \textsc{W[2]}. To prove that we reduce our problem to the classical \textsc{Dominating Set} problem which is \textsc{W[2]}-complete~\cite{henning2013total}.
\begin{lemma}\label{hop domination w[2]}
    \textsc{$r$-Hop Domination} problem is in \textsc{W[2]}.
\end{lemma}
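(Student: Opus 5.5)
We reduce \textsc{$r$-Hop Domination} to \textsc{Dominating Set} with the same parameter, mirroring Lemma~\ref{roman hop w[2]} and Lemma~\ref{r step w[2]}. Given an instance $(G,k)$ with $V(G)=\{v_1,\dots,v_n\}$, we build the \emph{exact-distance-$r$ graph} $G'$. It has a vertex $u_i$ for each $v_i$, and an edge $(u_j,u_l)$ whenever $d_G(v_j,v_l)=r$. All pairwise distances in $G$ come from $n$ breadth-first searches, so $G'$ is computed in polynomial time. We keep the parameter as $k$, so the map is an fpt-reduction.

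The key step is the equivalence: $S\subseteq V(G)$ is an $r$-hop dominating set of $G$ if and only if $S'=\{u_i : v_i\in S\}$ is a dominating set of $G'$. For the forward direction, take $u_l\notin S'$. Then $v_l\notin S$, so some $v_j\in S$ has $d_G(v_j,v_l)=r$, which gives $(u_j,u_l)\in E(G')$ with $u_j\in S'$. For the converse, take $v_l\notin S$. Then $u_l\notin S'$ has a neighbour $u_j\in S'$, which by construction means $d_G(v_j,v_l)=r$ with $v_j\in S$. The correspondence $S\leftrightarrow S'$ preserves cardinality. Hence $(G,k)$ is a yes-instance exactly when $(G',k)$ is.

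This differs from the total-domination argument of Lemma~\ref{r step w[2]} in one respect: vertices of $S$ need not be $r$-hop dominated themselves, and this matches ordinary (non-total) domination, where vertices of the dominating set are dominated for free. Loops are no concern, since $r\ge 2$ means $d_G(v,v)=0\neq r$. There is no real obstacle beyond checking that the edge relation of $G'$ is exactly the ``distance exactly $r$'' relation. Because \textsc{Dominating Set} is in \textsc{W[2]} and \textsc{W[2]} is closed under fpt-reductions, \textsc{$r$-Hop Domination} is in \textsc{W[2]}.

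As a by-product, this also gives an $O(\alpha^n)$-time exact algorithm for $r$-hop domination from any $O(\alpha^n)$ dominating-set algorithm, since $|V(G')|=n$.
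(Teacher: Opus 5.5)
Your proof is correct and follows the paper's approach: the paper builds the same exact-distance-$r$ graph $G'$ and reduces to \textsc{Dominating Set} with the parameter unchanged. The paper only asserts the equivalence by analogy with Lemma~\ref{r step w[2]}, so your explicit check of both directions, together with your observation that ordinary rather than total domination is the right target, fills in what the paper leaves implicit.
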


\begin{proof}
   Let $(G,k)$ be an instance of the \textsc{$r$-Hop Domination} problem. Let $V(G) = \{v_1, v_2, \dots, v_n\}$, we construct a new graph $G'$ as follows:
    

        \begin{enumerate}[]
        \item For each vertex $v_i \in V(G)$, introduce a corresponding vertex $u_i \in V(G')$.
        \item For every pair of vertices $v_j, v_k \in V(G)$ such that $d_G(v_j,v_k)=r$, 
we add the edge $(u_j,u_k)$ to $E(G')$.
    \end{enumerate}
    
    It is now clear that $G$ has an $r$-hop dominating set of size $k$ if and only if $G'$ has a dominating set of size $k$, following the same argument as in Lemma~\ref{r step w[2]}.
\end{proof}
   
To prove that \textsc{$r$-Hop  Domination} Problem is \textsc{W[2]}-hard, even for bipartite graphs and also chordal graphs, we reduce from the \textsc{Domination} problem in general graphs, which is \textsc{W[2]}-hard.

\begin{restatable}{lemma}{rhopdominationbipartite}\label{thm:$r$-hop domination bipartite} For each $r\geq 2$, the \textsc{$r$-Hop Domination} problem is \textsc{W[2]}-hard for bipartite graphs.
\end{restatable}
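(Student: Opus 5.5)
The plan is to give a parameterized reduction from \textsc{Dominating Set}, in the spirit of Lemma~\ref{thm:$r$-step domination bipartite}. Because of the bipartiteness requirement, I would not subdivide edges of $G_1$. Instead I would encode the closed neighbourhoods of $G_1$ in a bipartite incidence structure.

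\emph{Construction.} Given $(G_1,k)$ with $V(G_1)=\{v_1,\dots,v_n\}$, build $G_2$ as follows.
\begin{enumerate}
\item Take two copies $U=\{u_1,\dots,u_n\}$ and $W=\{w_1,\dots,w_n\}$ of $V(G_1)$, and join $u_i$ to $w_j$ if and only if $v_j\in N_{G_1}[v_i]$.
\item For every $w_j$, attach a pendant path $w_j=t_j^0-t_j^1-\dots-t_j^{r-1}$. Its end $t_j:=t_j^{r-1}$ is the \emph{test vertex} of $v_j$.
\item Add a hub vertex $p$ adjacent to all of $U$. To $p$ attach a forcing gadget of two pendant paths of length $r$, mirroring items (v)--(vii) in the $r$-step reduction. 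The number $c$ of gadget vertices forced into any solution will be fixed in the analysis.
\end{enumerate}
The graph $G_2$ is bipartite: $U$ and $W$ lie on opposite sides, the pendant paths alternate sides, and $p$ lies on the $W$-side. Its size is $O(n+m)$, which also yields the ETH corollary, and the new parameter is $k'=k+c$.

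\emph{Distances to the test vertices.} This is the central parity computation. If $v_j\in N[v_i]$, then $d(u_i,t_j)=1+(r-1)=r$. Otherwise $d_{G_2}(u_i,w_j)$ is odd and at least $3$, so $d(u_i,t_j)\ge r+2$. Hence a set $T\subseteq V(G_1)$ dominates $G_1$ if and only if $\{u_i: v_i\in T\}$ $r$-hop dominates every $t_j$.

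\emph{Forward direction.} Take $S=\{u_i:v_i\in T\}$ together with the $c$ forced gadget vertices. I would then check that every non-test vertex lies at distance exactly $r$ from $p$ or from some gadget vertex. For this, $d(p,t_j^{\ell})=2+\ell$ and $d(p,u_i)=1$, so the gadget vertices must sit at suitable offsets along the paths; if needed, I would use a path of length $2r$ from $p$, as in the $r$-step proof.

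\emph{Reverse direction.} First argue that the $c$ gadget vertices, or equivalent replacements, must belong to any $r$HDS of size at most $k'$, so at most $k$ vertices remain. Then normalize each remaining vertex $x$ to a vertex of $U$. If $x$ lies on the path of $w_j$ (including $t_j$ itself, which is allowed since hop domination permits $t_j\in S$), replace $x$ by $u_j$; this keeps $t_j$ covered. If $x$ lies in $W$ or $U$, replace it by a $u_i$ whose $r$-sphere contains all test vertices of $x$'s $r$-sphere. Using the distance computation above, these exchanges never lose coverage of any test vertex. The resulting $U$-set then corresponds to a dominating set of $G_1$ of size at most $k$.

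\emph{Main obstacle.} I expect the hardest step to be the forcing gadget. In hop domination, unlike step domination, a vertex may dominate itself simply by being in $S$. A pendant path therefore does not immediately force its endpoint's partner into $S$: the endpoint could just be chosen. I would first try to make the gadget rigid by using $k+1$ parallel identical pendant paths at the relevant anchor. A solution of size at most $k'$ cannot take all the endpoints, so it must take the common anchor at distance $r$. Simultaneously, the gadget vertices must not be at distance exactly $r$ from any $t_j$; otherwise the test would be bypassed. Parity helps here, and I would adjust path lengths by a parity argument until both requirements hold.
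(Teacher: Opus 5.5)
Your core reduction is the paper's. Your $w_j$ is the paper's $u_j^1$ (adjacent to $u_j$ and to all $u_i$ with $v_iv_j\in E(G_1)$), your test vertex $t_j$ is $u_j^r$, and your hub $p$ is $p_1$; the paper's extra subdivision vertices $a_{ij}$ are not needed. Your distance computation and your normalization of the leftover vertices are correct.

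\textbf{The gap.} The genuine gap is the forcing gadget, which you leave open, and the one you actually describe does not work. With two pendant $r$-paths at $p$, both endpoints are at distance exactly $r$ from $p$, so only $p$ is forced. The canonical solution is then $S=\{u_i:v_i\in T\}\cup\{p\}$. A vertex $u_i\notin S$ is at distance $1$ from $p$ and $2$ from every other $u_{i'}$. So for $r\ge 3$ nothing in $S$ is at distance exactly $r$ from it; only the unforced vertex at depth $r-1$ on a pendant path would be.

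Since $d(p,t_j^{\ell})=2+\ell$ and $d(p,u_i)=1$, the gadget must meet three requirements:
\begin{enumerate}
\item It contributes \emph{forced} solution vertices at every distance $0,1,\dots,r-1$ from $p$. Distance $r-2-\ell$ covers $t_j^{\ell}$ for $0\le \ell\le r-2$, including $w_j$, and distance $r-1$ covers $u_i$.
\item Its own auxiliary vertices are covered.
\item None of its vertices is at distance exactly $r$ from a test vertex.
\end{enumerate}
Unforced extra vertices in the canonical solution would raise the budget above $k+c$ and break the reverse direction.

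\textbf{How the paper closes it.} The paper uses a path $p_1p_2\cdots p_{2r}$ starting at the hub, with a pendant path $p_i-p_i^1-\cdots-p_i^r$ at every $p_i$, and budget $k+2r$. Then:
\begin{itemize}
\item $p_{r-1-\ell}$ covers $t_j^{\ell}$, and $p_r$ covers $u_i$.
\item An interior vertex $p_i^s$ ($1\le s\le r-1$) is at distance exactly $r$ from $p_{i+(r-s)}$ or from $p_{i-(r-s)}$. One of these exists precisely because there are $2r$ anchors rather than $r$.
\item $d(p_i,t_j)=i+r>r$. This distance bound, rather than parity, is what keeps the gadget from bypassing the tests.
\end{itemize}

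\textbf{The self-domination obstacle.} The issue you single out as the main obstacle dissolves by a one-line exchange, so $k+1$ parallel copies are unnecessary. The only vertex at distance exactly $r$ from $p_i^r$ is $p_i$, so every $r$HDS contains $p_i$ or $p_i^r$. These $2r$ pairs are disjoint. Replacing $p_i^r$ by $p_i$ loses nothing, because $p_i^r$ hop-dominates only itself and $p_i$. No vertex of these pairs equals, or is at distance $r$ from, any $t_j$. Hence at most $k$ vertices remain to cover the test vertices, and your normalization finishes the proof.
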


\begin{proof}

The above same construction used, which is described in Theorem \ref{thm:$r$-step domination bipartite}.
 \begin{claim}\label{claim r hop bipartite}
        $G_1$ has a dominating set of size at most $k$, if and only if $G_2$ has an $r$-hop dominating Set of size at most $k + 2r$, where $r\geq 2$.
    \end{claim}
     
    \begin{claimproof}
        \textsf(If part) Suppose $G_1$ has a dominating set $T$ with $|T| \leq k$. Let $S_1 = \{p_1, p_2,\dots, p_{2r}\}$. This set $r$-hop dominates all vertices in $V(G_2) \setminus \{u_i^r \mid i = 1, \dots, n\}$. The remaining vertices $\{u_i^r \mid i = 1, \dots, n\}$ are $r$-hop dominated by $T' = \{u_i \mid v_i \in T\}$. Thus, $G_2$ has a $r$-hop dominating Set of size $\leq k + 2r$.

         \textsf{(Only-if part)} Suppose $G_2$ has a $r$-hop dominating Set $D$ with $|D| \leq k + 2r$. To $r$-step dominate the vertex Now consider $p_i^r \in V(G_2)$, notice that either $p_i$ or $p_i^r$ must be in $D$. If $p_i^r \in D$ we can modify $D$ by $D\setminus \{p_i^r\}\cup \{p_i\}$ The set $\{p_1,p_2,\dots,p_{2r}\}$ hop dominate all the vertices in $V(G_2)$ except $\{u_1^r,u_2^r,\dots,u_n^r\}$. Thus the vertices $\{u_i^r \mid i = 1, \dots, n\}$ are $r$-hop dominated by a set $T' \subseteq D$ with $|T'| \leq k$.
             We can modify $T'$ to contain only vertices from $\{u_1, \dots, u_n\}$. 
             Let $T = \{v_i \mid u_i \in T'\}$. We show that $T$ is a dominating set for $G_1$.

        \medskip

         Suppose there exists $v_k \in V(G_1)$ not dominated by $T$. Now, $u_k^r \in V(G_2)$,  must be $r$-step dominated by some $x \in D$. Now $x = u_j$, for some $j$, then $d_{G_1}(v_k, v_j) \leq 1$, contradicting that $v_k$ is not dominated.
    \end{claimproof}
    Hence, for each $r\geq 2$, \textsc{$r$-Hop Domination} Problem is \textsc{W[2]}-hard, even on bipartite graphs.
   \end{proof}
    \begin{restatable}{lemma}{rhopdominationchordal}\label{thm:$r$-hop domination chordal} For each $r\geq 2$, the \textsc{$r$-Hop Domination} problem is \textsc{W[2]}-hard for chordal graphs.
\end{restatable}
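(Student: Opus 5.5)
We reuse the chordal construction from the proof of Lemma~\ref{thm:$r$-step domination chordal} and prove the analogue of Claim~\ref{claim r step chordal} for $r$-hop domination. Let $G_2$ be built from $G_1$ as there: the $u_i$ form a clique, each $u_i$ carries a pendant path $u_i-u_i^1-\dots-u_i^r$, there are cross edges $(u_i^1,u_j)$ and $(u_i,u_j^1)$ for $(v_i,v_j)\in E(G_1)$, and the gadget $p_1,\dots,p_{2r}$ has pendant paths $p_i-p_i^1-\dots-p_i^r$. First we check that $G_2$ is chordal. Every cycle lies inside the clique on $\{u_i\}$ together with the triangles $u_i u_j u_i^1$ created by the cross edges, and the remaining parts are trees attached at cut vertices. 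So a perfect elimination ordering exists: eliminate pendant path vertices from the leaves inward, then the $u_i^1$ (whose neighbourhood $\{u_i\}\cup N_{G_1}$-copies lies in the clique), then the clique.

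The claim to prove is that $G_1$ has a dominating set of size at most $k$ if and only if $G_2$ has an $r$-hop dominating set of size at most $k+2r$.

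For the forward direction we take $S_1=\{p_1,\dots,p_{2r}\}$ together with $T'=\{u_i : v_i\in T\}$. The verification that $S_1$ hop-dominates everything except the $u_i^r$ is the same distance computation as in the $r$-step case. A vertex $u_k^r$ is at distance exactly $r$ from $u_j$ whenever $v_j\in N_{G_1}[v_k]$, via $u_j$ (or the cross edge to $u_k^1$) followed by the path. We must also check that no shortcut through the clique makes the distance smaller. Since $u_k^r$ is reachable only through $u_k^{1}$, we get $d(u_j,u_k^r)=r-1+d(u_j,u_k^1)$. Here $d(u_j,u_k^1)=1$ exactly when $j=k$ or $v_jv_k\in E(G_1)$, and it equals $2$ otherwise.

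For the backward direction, let $D$ be an $r$-hop dominating set with $|D|\le k+2r$. The difference from the $r$-step case is that vertices of $D$ need not be dominated. For each $p_i^r$, either $p_i^r\in D$ or some vertex at distance $r$ is in $D$. On the pendant path the only such vertex is $p_i$, unless other vertices lie at distance $r$ through $p_i$; the gadget of parallel paths with the extra chord $p_i p_i^1$ is designed to make the endpoint essentially private. We exchange $p_i^r$ for $p_i$ when necessary, exactly as in the proof of Claim~\ref{claim r hop bipartite}, so that $\{p_1,\dots,p_{2r}\}\subseteq D$ without increasing $|D|$. This leaves a set $T'$ of at most $k$ vertices handling the $u_i^r$. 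Each $x\in T'$ is then normalized to some $u_j$: a vertex on the path of $u_k$, or $u_k^r$ itself, is replaced by $u_k$, and a vertex $u_j^t$ with $t\ge1$ is replaced by $u_j$, whose set of hop-dominated $u_\ell^r$ is a superset. For a $u_k^r$ lying in $D$, we replace it by $u_k$; this is valid because $u_k$ still covers $u_k^r$ at distance $r$, and every other $u_\ell^r$ is either covered by the $p$'s or was already covered by something else. Setting $T=\{v_j: u_j\in T'\}$, an undominated $v_k$ would force $u_k^r$ to be hop-dominated by some $u_j$ with $d(u_j,u_k^r)=r$. By the distance formula above this gives $v_j\in N_{G_1}[v_k]$, a contradiction.

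The main obstacle is the exchange step: proving rigorously that replacing a non-$u$ vertex of $D$ by a suitable $u_j$ never loses domination. This requires the exact distance sets from $u_j^t$ and from the $p$-gadget vertices to all $u_\ell^r$, and the clique can shorten paths in ways absent from the bipartite construction. I would first tabulate $d(x,u_\ell^r)=(r-t)+\ldots$ for $x=u_j^t$ and check monotonicity in $t$. FPT-ness and W[2] membership then follow from Lemma~\ref{hop domination w[2]}.
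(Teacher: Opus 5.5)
Your proposal follows the paper's proof essentially step for step. It uses the same chordal construction, the same claim with budget $k+2r$, the exchange $p_i^r\mapsto p_i$, and the normalization of the remaining at most $k$ vertices of $D$ to clique vertices $u_j$; your distance formula $d(u_j,u_k^r)=r-1+d(u_j,u_k^1)$ and your separate handling of $u_k^r\in D$ make explicit what the paper leaves implicit. The step you flag as the main obstacle is immediate: for $t\ge 1$ we have $d(u_j^t,u_j^r)=r-t$ and $d(u_j^t,u_\ell^r)\ge (t-1)+2+(r-1)=r+t$ for $\ell\neq j$ (no two $u^1$-vertices are adjacent), and every $p$-gadget vertex is at distance at least $r+1$ from every $u_\ell^r$, so such vertices of $D$ hop-dominate no $u_\ell^r$ and can simply be discarded.
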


  \begin{proof}
      
The above same construction used, which is described in Theorem \ref{thm:$r$-step domination chordal}.
 \begin{claim}\label{claim r hop chordal}
         $G_1$ has a dominating set of size at most $k$, if and only if $G_2$ has an $r$-hop dominating Set of size at most $k + 2r$, where $r\geq 2$.
    \end{claim}
     
    \begin{claimproof}
        \textsf(If part) Suppose $G_1$ has a dominating set $T$ with $|T| \leq k$. Let $S_1 = \{p_1, p_2,\dots, p_{2r}\}$. This set $r$-hop dominates all vertices in $V(G_2) \setminus \{u_i^r \mid i = 1, \dots, n\}$. The remaining vertices $\{u_i^r \mid i = 1, \dots, n\}$ are $r$-hop dominated by $T' = \{u_i \mid v_i \in T\}$. Thus, $G_2$ has a $r$-hop dominating Set of size $\leq k + 2r$.

         \textsf{(Only-if part)} Suppose $G_2$ has a $r$-hop dominating Set $D$ with $|D| \leq k + 2r$. To $r$-step dominate the vertex Now consider $p_i^r \in V(G_2)$, notice that either $p_i$ or $p_i^r$ must be in $D$. If $p_i^r \in D$ we can modify $D$ by $D\setminus \{p_i^r\}\cup \{p_i\}$. The set $\{p_1,p_2,\dots,p_{2r}\}$ hop dominate all the vertices in $V(G_2)$ except $\{u_1^r,u_2^r,\dots,u_n^r\}$. Thus the vertices $\{u_i^r \mid i = 1, \dots, n\}$ are $r$-hop dominated by a set $T' \subseteq D$ with $|T'| \leq k$.
             We can modify $T'$ to contain only vertices from $\{u_1, \dots, u_n\}$. 
             Let $T = \{v_i \mid u_i \in T'\}$. We show that $T$ is a dominating set for $G_1$.

        \medskip

         Suppose there exists $v_k \in V(G_1)$ not dominated by $T$. Then $u_k^r \in V(G_2)$,  must be $r$-step dominated by some $x \in D$. Now $x = u_j$, for some $j$, then $d_{G_1}(v_k, v_j) \leq 1$, contradicting that $v_k$ is not dominated.
    \end{claimproof}
    Hence, for each $r\geq 2$, the \textsc{$r$-Hop Domination} Problem is \textsc{W[2]}-hard, even on chordal graphs.
  \end{proof}

     Unless ETH fails, the Domination problem does not admit an algorithm working in time $2^{o(n+m)}$, where $n$ and $m$ are
the sizes of the vertex and edge sets of the input graph, respectively~\cite{cygan2015parameterized}. From Claim \ref{claim r hop bipartite} we get the following result:
\begin{corollary}
   Unless ETH fails, for each $r\geq 2$, the \textsc{$r$-Hop Domination} problem does not have $2^{o(n+m)}$ algorithm, even for bipartite graphs and also chordal graphs, where $n $ is the number of vertices and $m$ is the number of edges of the graph.
\end{corollary}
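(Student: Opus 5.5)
The plan is to reuse the reductions of Lemma~\ref{thm:$r$-hop domination bipartite} and Lemma~\ref{thm:$r$-hop domination chordal}, track the size of the constructed graph $G_2$, and combine this with the ETH lower bound for \textsc{Dominating Set} quoted above~\cite{cygan2015parameterized}. That bound says \textsc{Dominating Set} on a graph $G_1$ with $n$ vertices and $m$ edges has no $2^{o(n+m)}$-time algorithm. Claims~\ref{claim r hop bipartite} and~\ref{claim r hop chordal} say that $(G_1,k)$ is a yes-instance iff $(G_2,k+2r)$ is. So any algorithm for \textsc{$r$-Hop Domination} on $G_2$ decides \textsc{Dominating Set} on $G_1$ once we add the polynomial construction time. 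The whole argument therefore reduces to bounding $N+M=|V(G_2)|+|E(G_2)|$ linearly in $n+m$, with $r$ a fixed constant.

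\emph{Bipartite case.} I would count the pieces of the construction in Lemma~\ref{thm:$r$-step domination bipartite}. The $u_i$ contribute $n$ vertices, the subdivision vertices $a_{ij}$ contribute $m$, and the pendant paths $u_i^1,\dots,u_i^r$ contribute $rn$. The gadget $p_1,\dots,p_{2r}$ together with its pendant paths contributes $O(r^2)$ vertices. For edges, there are $2m$ subdivided-edge halves, $rn$ path edges, $2m$ edges of the form $(u_i^1,u_j)$ and $(u_i,u_j^1)$, $n$ edges to $p_1$, and $O(r^2)$ edges inside the gadget. Hence $N+M=O(r^2(n+m))=O(n+m)$. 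Suppose there were a $2^{o(N+M)}$-time algorithm for \textsc{$r$-Hop Domination} on bipartite graphs. Running it on $G_2$ would solve \textsc{Dominating Set} in time $2^{o(n+m)}\cdot\mathrm{poly}(n)$, contradicting ETH.

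\emph{Chordal case.} Here the construction of Lemma~\ref{thm:$r$-step domination chordal} makes $\{u_1,\dots,u_n\}$ a clique, which gives $M=\Theta(n^2)$. The naive count then only excludes $2^{o(N+M)}$ algorithms when $N+M$ is replaced by something quadratic, so this is the step I expect to be the main obstacle. My first attempt will be to start from sparse hard instances. Via the sparsification lemma, and a linear-size reduction from \textsc{Vertex Cover} on subcubic graphs (add a triangle on every edge), \textsc{Dominating Set} has no $2^{o(n)}$ algorithm even when $m=O(n)$.

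This alone does not remove the clique. So the second step is to replace the clique by a sparser chordal gadget that still keeps every $u_i$ at distance $\le 1$ from the relevant $u_j^1$ and adds no new $r$-distance pairs. Concretely, I would check which chords are actually needed. The $4$-cycles $u_i u_i^1 u_j u_j^1$ need only the $m$ edges $u_iu_j$ of $G_1$, and the remaining long cycles come from cycles of $G_1$ through $p_1$. I would try to triangulate those long cycles using a bounded-degree chordal supergraph of $G_1$ on the $u_i$ with $O(n+m)$ edges, for instance a triangulation routed through auxiliary hub vertices attached to $p_1$. For that gadget I would then re-verify two things: that Claim~\ref{claim r hop chordal} goes through, and that the $p$-gadget still $r$-hop dominates everything except the $u_i^r$.

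If such a linear-size chordal gadget is obtained, the same computation as in the bipartite case gives $N+M=O(n+m)$ and completes the corollary. If it fails, the fallback is to verify the chordal statement only in the form the construction actually supports. In that case I would flag explicitly that the $2^{o(n+m)}$ bound for chordal graphs needs the improved gadget.
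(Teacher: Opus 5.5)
Your bipartite argument is the paper's proof. The paper only considers the bipartite construction, notes that $G_2$ has $O(n+m)$ vertices and edges, and concludes. It never returns to the chordal construction. You are right that the clique on $\{u_1,\dots,u_n\}$ makes $|E(G_2)|=\Theta(n^2)$, so the chordal clause does not follow from the same count; the paper passes over this. However, your proposal does not prove that clause either, and the repair you outline cannot work.

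\emph{Why the sparse chordal gadget fails.} The obstruction is treewidth. A chordal graph with $M$ edges has clique number at most $1+\sqrt{2M}$, and its treewidth equals its clique number minus one. Your modified graph keeps the edges $u_iu_j$ for $v_iv_j\in E(G_1)$. Even without them, the cross edges $u_i^1u_j$ are exactly what put $u_j$ at distance $r$ from $u_k^r$, and they yield $G_1$ as a minor once each edge $u_iu_i^1$ is contracted. So a chordal output with $O(n+m)$ edges would force $\mathrm{tw}(G_1)=O(\sqrt{n+m})$, with a tree decomposition readable off its clique tree. The $2^{O(\mathrm{tw})}n^{O(1)}$ algorithm for \textsc{Dominating Set} would then solve your sparse hard instances ($m=O(n)$) in $2^{O(\sqrt n)}$ time, contradicting ETH. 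The bounded-degree variant fails even unconditionally, since a bounded-degree chordal graph has bounded clique number and hence bounded treewidth. So, assuming ETH, the gadget you are looking for does not exist on exactly the instances you need, and your fallback leaves the chordal clause unproved.

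\emph{What is actually provable.} The clique construction honestly gives only the weaker statement: there is no $2^{o(N)}$-time algorithm, where $N=|V(G_2)|=O(n+m)$. Moreover, brute force already runs in $2^{N}N^{O(1)}$ time. Hence a genuine $2^{o(N+M)}$ lower bound on chordal graphs would have to be witnessed by (nearly) sparse chordal instances, whose treewidth is $o(N)$. That would require a genuinely different reduction, not a sparsified version of this one.
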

\begin{proof}
    It is known that there is no $2^{o(n)}$-time algorithm for finding a minimum dominating set on $n$ vertices for a general graph, unless ETH fails~\cite{bacso2019subexponential}. Consider the construction that we have used to prove Lemma \ref{thm:$r$-hop domination bipartite}. In the proof of Claim \ref{claim r hop bipartite}, 
   the graph $G_1$ has $n$ vertex and $m$ edges. We constructed a bipartite graph $G_2$ with $O(n+m)$ vertices and $O(n+m)$ edges.  This reduction proves that a $2^{o(n+m)}$ -time algorithm for finding a minimum $r$-hop dominating set on $n$ vertices and $m$ edges for bipartite graph could be used to obtain a $2^{o(n+m)}$ time algorithm for finding minimum dominating set on general graphs  and this would violate the ETH. This reduction says that there is no $2^{o(n+m)}$-time algorithm for finding a minimum $r$-hop dominating set on $n$ vertices and $m$ edges for bipartite graphs, unless ETH fails. 
\end{proof}
Finding a minimum dominating set cannot
be approximated to within a factor of $(1 - \epsilon)\log n$ in polynomial time for any constant $\epsilon > 0$ unless $NP \subseteq DTIME(n^{O(\log \log n})$~\cite{chlebik2008approximation}. Claim \ref{claim r step bipartite} and Claim \ref{claim r step chordal} we get the following result:
\begin{corollary}
For each $r\geq 2$, the  \textsc{$r$-Hop Domination} problem cannot be approximated to within a factor of $(1- \epsilon)\log n$, even for bipartite graphs and chordal graphs.
\end{corollary}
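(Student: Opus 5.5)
The corollary follows from approximation-preserving properties of the two reductions used for Lemma~\ref{thm:$r$-hop domination bipartite} and Lemma~\ref{thm:$r$-hop domination chordal}. Given $G_1$ with $n$ vertices and $m$ edges, each construction yields $G_2$ with $N=O(n+m)$ vertices in the bipartite case and $N=O(n)$ vertices in the chordal case; in both cases $\log N = O(\log n)$. The key quantitative fact is
\[
\gamma_{rh}(G_2)=\gamma(G_1)+2r .
\]
The inequality $\le$ is the (if part) of Claim~\ref{claim r hop bipartite} and Claim~\ref{claim r hop chordal}. The inequality $\ge$ is the (only if part), which I would make constructive. Given any $r$-hop dominating set $D$ of $G_2$, I would first replace each $p_i^r$ by $p_i$. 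Then every vertex of $D\setminus\{p_1,\dots,p_{2r}\}$ that helps dominate some $u_j^r$ would be replaced by a vertex $u_l$ with $v_l$ dominating $v_j$ in $G_1$. This yields in polynomial time a dominating set of $G_1$ of size at most $|D|-2r$.

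I would then argue by contradiction. Suppose a polynomial algorithm approximated $\gamma_{rh}$ within $(1-\epsilon)\log N$. Since $2r$ is a constant, the additive shift only matters when $\gamma(G_1)$ is small. For $\gamma(G_1)\le c$ with $c$ a constant depending on $r$ and $\epsilon$, I would solve \textsc{Dominating Set} exactly by brute force in $n^{O(c)}$ time. Otherwise, the approximate solution for $G_2$ has size at most $(1-\epsilon)\log N\,(\gamma(G_1)+2r)$, and translating back gives a dominating set of $G_1$ of size at most
\[
(1-\epsilon)\,\log N\,\Bigl(1+\tfrac{2r}{\gamma(G_1)}\Bigr)\gamma(G_1)-2r .
\]
Choosing $c$ large makes $1+2r/c \le 1+\epsilon/2$.

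The main obstacle is that $\log N$ may exceed $\log n$ by a constant factor in the bipartite construction, because $N$ can be $\Theta(n^2)$. So a ratio $(1-\epsilon)\log N$ might not translate to $(1-\epsilon')\log n$. To handle this, I would invoke the hardness of \textsc{Dominating Set} stated via Set Cover, which is robust under polynomial blow-ups when parametrized in the universe size. Equivalently, I would use the chordal construction, where $N=O(n)$ gives $\log N=\log n+O(1)$, so the ratio loses only a $(1+o(1))$ factor. For the bipartite case I would rely on instances from the Set Cover reduction in which $m=O(n^{1+o(1)})$, or else accept $\log$-hardness up to the constant, as the paper's cited result~\cite{chlebik2008approximation} permits. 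Absorbing all constants into $\epsilon$ then gives a $(1-\epsilon')\log n$ approximation for \textsc{Dominating Set}, which contradicts~\cite{chlebik2008approximation}.
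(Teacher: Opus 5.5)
Your route is the paper's own: the corollary is read off from the bipartite and chordal reductions. The paper does this in one line, without addressing either the additive $2r$ or the comparison between $\log N$ and $\log n$. Your brute-force treatment of inputs with $\gamma(G_1)\le c$ correctly absorbs the additive shift. In the chordal case $N=(r+1)n+O(r^2)$ gives $\log N=\log n+O(1)$, so, granting the chordal claim, that case is complete.

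One tightening: the constructive only-if direction needs no replacement step. As you phrase it (replace each helper by some $u_l$ with $v_l$ dominating $v_j$), it could fail if a single vertex hop-dominated several $u_j^r$ whose $v_j$ have no common closed neighbour in $G_1$. What actually holds is that the vertices at distance exactly $r$ from $u_j^r$ are precisely $N(u_j^1)\setminus\{u_j^2\}=\{u_j\}\cup\{u_l : v_l\in N_{G_1}(v_j)\}$, and the only such vertex for $p_i^r$ is $p_i$. So $T=\{v_l : u_l\in D\}\cup\{v_j : u_j^r\in D\}$ dominates $G_1$. The $2r$ sets $D\cap\{p_i,p_i^r\}$ are nonempty and disjoint from the vertices used for $T$, so $|T|\le |D|-2r$.

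The genuine gap is the bipartite case, which you identify but do not close. Since $G_2$ contains a subdivision vertex $a_{ij}$ for every edge of $G_1$, dense inputs give $N=\Theta(n^2)$. A $(1-\epsilon)\log N$ guarantee on $G_2$ then translates only into roughly a $2(1-\epsilon)\ln n$ guarantee on $G_1$. That is weaker than the $(\ln n+1)$-approximation greedy already achieves for \textsc{Dominating Set}, so no contradiction with \cite{chlebik2008approximation} can be extracted from this construction used as a black box. None of your fallbacks repairs this:
\begin{itemize}
\item The logarithmic threshold is not robust under polynomial blow-ups: passing from $n$ to $n^{c}$ multiplies it by $c$.
\item The chordal construction does not produce bipartite graphs, so it is not an equivalent route.
\item The assertion that hard instances satisfy $m=n^{1+o(1)}$ concerns the internals of the Set Cover reduction. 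You do not prove it, and it is not part of the cited statement.
\item Hardness \emph{up to the constant} is strictly weaker than the corollary.
\end{itemize}

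A clean repair is to modify the construction. The vertices $a_{ij}$, and indeed any copy of $E(G_1)$ among the $u_i$, are never used. By the distance fact above, the only-if direction does not involve them. The if-direction only needs $u_i$ and $u_i^t$ ($1\le t\le r-1$) to be at distance exactly $r$ from $p_r$ and $p_{r-t}$ respectively, which holds through $p_1$. This requires the path of step (vi) to start at $p_1$, which the paper's if-direction already needs for $r\ge 3$. Deleting the $a_{ij}$ keeps $G_2$ bipartite and gives $N=(r+1)(n+2r)$, after which your chordal computation applies verbatim.
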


\section{Conclusion}\label{sec:conclusion}
We studied the parameterized complexity of the exact-distance 
variants \textsc{$r$-Step Domination}, \textsc{$r$-Hop Domination}, and 
\textsc{$r$-Hop Roman Domination} for every $r \ge 2$. 
We proved that \textsc{$r$-Hop Roman Domination} is \textsc{W[2]}-complete, 
and that \textsc{$r$-Step Domination} and \textsc{$r$-Hop Domination} 
are \textsc{W[2]}-complete even on bipartite and chordal graphs. 
Our reductions also yield ETH-based lower bounds ruling out 
$2^{o(n+m)}$-time algorithms, as well as logarithmic inapproximability results.

Together, these results provide a comprehensive complexity classification 
for exact-distance domination variants and show that increasing the distance 
constraint does not make the problems algorithmically easier, even on 
structurally restricted graph classes.

An interesting direction for future work is to investigate whether 
fixed-parameter tractable algorithms exist for these problems on 
more restricted graph classes, or under alternative parameterizations.

\bibliography{bib}
\end{document}